\crefname{enumi}{}{}
\crefname{property}{property}{properties}
\crefname{LEM}{Lemma}{the Lemmas}
\renewcommand{\PrintDOI}[1]{\doi{#1}}
\newtheorem{THM}{Theorem}[section]
\newtheorem{LEM}[THM]{Lemma}
\newtheorem{COR}[THM]{Corollary}
\newtheorem{QUESTION}[THM]{Question}
\newtheorem{PROP}[THM]{Proposition}
\newtheorem{COUNTEREX}[LEM]{Counterexample}
\newtheorem{OBS}[THM]{Observation}
\theoremstyle{definition}
\newtheorem{EX}[THM]{Example}
\newcommand{\abs}[1]{\lvert#1\rvert}
\renewcommand{\phi}{\varphi}
\newcommand{\N}{\mathbb{N}}
\newcommand{\sm}{\smallsetminus}
\newcommand{\cA}{\mathcal{A}}
\newcommand{\cN}{\mathcal{N}}
\newcommand{\cV}{\mathcal{V}}
\newcommand{\cupdot}{\dot\cup}
\title{Agile Sets in Graphs}
\author{Christian Elbracht \and Jakob Kneip \and Maximilian Teegen}
\date{10th September, 2021}
\begin{document}
\maketitle
\begin{abstract}
A set of vertices in a graph is \emph{agile} if, however we partition the set into two parts, we can always find two vertex-disjoint connected subgraphs where one covers the first and the other the second part.
We present a characterization for the existence of large agile sets in terms of $K_{2,k}$ and large strip minors.
\end{abstract}
\section{Introduction}\label{sec:agile}
Seymour \cite{SEYMOUR1980293} and, independently, Thomassen \cite{THOMASSEN1980371} considered, given an ordered set 
$Z=(x_1,x_2,\dots,x_k,y_1,y_2,\dots,y_k)$ of vertices in a graph $G$, the 
question whether there exists a $Z$-linkage in $G$. Here, a 
\emph{$Z$-linkage} 
consists of vertex-disjoint paths $P_1,P_2,\dots P_k$ between $x_i$ and $y_i$. 
Both Seymour and Thomassen independently characterized when a graph contains 
a $Z$-linkage for a given ordered set $Z$ of size $4$.

Instead of considering this question for larger $k$ let us generalize the 
special case of $k=2$ in a different direction. Let us say that a pair 
$(X_1,X_2)$ of disjoint vertex sets in $G$ is \emph{independent} if we 
find two disjoint trees $T_1,T_2$ in $G$ so that $T_1$ contains all of 
$X_1$ and $T_2$ contains all of $X_2$.

Weißauer \cites{DWpersonal,Mathoverflow_independent} invented this notion to give a possible definition of when a vertex 
set is, in some sense, dense in the graph: let us say that a vertex set $X$ is 
\emph{agile} 
if for every partition $X=X_1\cupdot X_2$ the pair $(X_1,X_2)$ is independent.

A typical example of a large agile set can be found in the complete bipartite graph 
$K_{2,k}$: the set $X$ of all the degree-$2$ vertices in this graph is agile, 
since for any partition $X_1\cupdot X_2$ of this set we can obtain disjoint 
trees 
$T_1$ and $T_2$ by adding one of the degree-$k$ vertices to $X_1$ and the other 
to $X_2$.

Moreover, this notion of agile sets is well-behaved under the minor relation, 
since if $H$ is a minor of $G$ containing an agile set $X$, then picking an 
arbitrary vertex from every branch set corresponding to a vertex in $X$ results 
in an agile set contained in $G$.

In light of these observations, Weißauer asked \cite{DWpersonal,Mathoverflow_independent} whether, at 
least qualitatively, a graph contains a large agile set if and only if the 
graph 
contains a large $K_{2,k}$ as a minor. More precisely, Weißauer asked the 
following:
\begin{QUESTION}\label{question:agile_k2n}
 Is it true that for every $k$ there exists an $m$ such that every graph with 
an 
agile set of size at least $m$ contains $K_{2,k}$ as a minor?
\end{QUESTION}
%

For $k=2$ this is the case, since all graphs without a $K_{2,2}$-minor are 
outerplanar and thus cannot contain an agile set of size $4$. This was 
already observed by Weißauer:
\begin{OBS}[\cite{DWpersonal}]\label{agile:k22}
If $G$ does not contain a $K_{2,2}$-minor, then $G$ cannot contain an agile set 
of size $\ge 4$, as in that case $G$ is outerplanar.
\end{OBS}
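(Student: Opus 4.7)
The plan is to split \cref{agile:k22} into its two claims: \emph{no $K_{2,2}$-minor implies outerplanar}, and \emph{outerplanar implies no agile set of size four}. The first is an immediate consequence of the classical Chartrand--Harary characterization of outerplanar graphs as those forbidding both $K_4$ and $K_{2,3}$ as minors: $K_{2,3}$ contains $K_{2,2}$ as a subgraph, while $K_4$ with any perfect matching deleted is $K_{2,2}$, so a graph excluding a $K_{2,2}$-minor excludes both forbidden minors.

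For the second claim, I would argue by contradiction: suppose an outerplanar $G$ contains an agile set $X = \{v_1, v_2, v_3, v_4\}$. A preliminary reduction is that $X$ must lie inside a single component of $G$, for otherwise one could partition $X$ so as to place two vertices from different components together, making that partition non-independent. Pick an outerplanar drawing of the relevant component inside a closed disk $D$ with all vertices on $\partial D$; after relabelling, the points $v_1, v_2, v_3, v_4$ appear on $\partial D$ in this cyclic order. The key step is to invoke agility for the \emph{crossing} partition $\{v_1, v_3\} \cupdot \{v_2, v_4\}$: this yields disjoint trees $T_1 \supseteq \{v_1, v_3\}$ and $T_2 \supseteq \{v_2, v_4\}$ and hence vertex-disjoint paths $P$ from $v_1$ to $v_3$ and $Q$ from $v_2$ to $v_4$.

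A short Jordan-curve argument then finishes the proof. Viewed in the drawing, $P$ is a simple arc in $D$ with its endpoints on $\partial D$, and it separates $D \setminus P$ into two open regions whose closures meet $\partial D$ in the two arcs between $v_1$ and $v_3$. By the chosen cyclic order, $v_2$ lies on one of these arcs and $v_4$ on the other, so any $v_2$--$v_4$ arc inside $D$ must cross $P$; but $Q$ shares no vertex (and hence, by planarity of the embedding, no point at all) with $P$, a contradiction. The only place where care is required is confirming this topological step when $G$ is merely connected (not $2$-connected), so that the outer face boundary is a closed walk rather than a simple cycle; however, each vertex still sits at a single point of $\partial D$, and the Jordan-curve argument needs no more than that.
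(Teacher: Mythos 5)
Your proof is correct, and it fills in details that the paper itself omits: the paper states \cref{agile:k22} as an observation of Weißauer with only the one-clause justification ``as in that case $G$ is outerplanar,'' leaving both the reduction to outerplanarity and the fact that outerplanar graphs admit no agile $4$-set unproved. Your route is the natural one: the Chartrand--Harary minor characterization of outerplanarity, observing that both $K_4$ and $K_{2,3}$ contain $K_{2,2}$ as a minor, followed by a Jordan-curve argument against the crossing partition $\{v_1,v_3\}\cupdot\{v_2,v_4\}$ in an outerplane drawing. The only place worth a little extra care is the topological step: when the component is not $2$-connected, the arc $P$ from $v_1$ to $v_3$ may touch $\partial D$ at intermediate vertices, so $D\setminus P$ can have more than two components; but the relevant fact, that $v_2$ and $v_4$ lie in the closures of different components and hence any $v_2$--$v_4$ arc in $D$ must meet $P$, still holds, so your conclusion goes through. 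You might also note explicitly that, once $X$ is confined to a single component, the witnessing trees for any partition of $X$ are automatically contained in that component (being connected subgraphs meeting it), so restricting attention to it is harmless.
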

In this paper we will analyse graphs with larger agile sets and answer 
\cref{question:agile_k2n}. We will find out that, while the answer to 
\cref{question:agile_k2n} is `yes' for $k\le 4$, for larger $k$ the question must 
be answered negatively. However, this is only the case due to one special 
additional type of graph, and consequently we will be able to show that there is 
a 
function $f\colon\N\to\N$ such that every graph containing an agile set of size 
$f(k)$ will need to contain a $K_{2,k}$, or this other special type of graph, 
which is called a regular strip of length $k$, as a minor. I.e.\ we show the following:
\begin{restatable}{THM}{agilecharacter}\label{thm:agile_character}
  There exists a function $f\colon \N \to \N$, such that every graph with an agile 
set of size $f(k)$ either contains $K_{2,k}$ or a regular strip of length $k$ 
as a minor.
\end{restatable}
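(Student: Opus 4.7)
The plan is to establish the contrapositive: a graph $G$ containing neither $K_{2,k}$ nor a regular strip of length $k$ as a minor admits no agile set of size exceeding a bound $f(k)$ depending only on $k$. I would fix an agile set $X \subseteq V(G)$ and, using the minor-closedness of agility noted in the introduction, pass to a minor-minimal configuration in which every vertex outside $X$ is used by some pair of witnessing trees, so that no edge of $G$ may be contracted or deleted without destroying the agility of $X$.

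The core of the argument is to analyse the connectivity forced by agility. Applying agility to many different bipartitions and tracking the witnessing tree pairs, a Menger-type argument should show that either some pair of vertices is crossed by $k$ internally vertex-disjoint paths whose endpoints meet $X$ — producing a $K_{2,k}$ minor — or else $G$ admits a tree-decomposition of bounded adhesion and bounded torso size across which the elements of $X$ are distributed. Agility then forbids genuine branching in the decomposition tree: at a node with three directions each carrying a block of $X$-vertices, one could choose a bipartition that places two of the blocks on one side and the third on the other, and the two witnessing trees would be forced to meet at the branch node, a contradiction.

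The remaining case is a path-like decomposition. A bounded bag-size admits only finitely many possible adhesion patterns between consecutive bags, and agility applied to partitions separating early bags from late ones should pin these down to exactly the adhesion pattern of a regular strip, with the vertices of $X$ distributed one per bag along the spine. Choosing $f(k)$ large enough relative to the bounded bag-size then forces the strip to have length at least $k$. The principal obstacle will be the middle step: extracting a genuine $K_{2,k}$ minor rather than mere high connectivity at a branch node, which I expect will require a Ramsey-type pigeonhole on many disjoint linkages to identify two common hub vertices, together with a careful case analysis to eliminate all alternative linear gadgets and isolate the regular strip as the unique surviving structure.
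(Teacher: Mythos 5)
Your proposal has two genuine gaps, one of which would derail the argument outright.

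First, the claim that agility forbids branching in the decomposition tree is false. You argue that at a node with three directions each carrying a block of $X$-vertices, a bipartition placing two blocks on one side and the third on the other forces the two witnessing trees to meet at the branch node. But if the adhesions at that node have size $2$ (which is exactly the regime the paper works in --- separations of order $2$), the two trees can pass through each separator disjointly, each claiming one of the two separator vertices. Indeed $K_{2,k}$ itself realises precisely this configuration: its natural tree-decomposition has a central bag (the two hubs) with $k$ pendant leaves each holding one degree-$2$ vertex of the agile set, and agility holds with no contradiction. So the star-like case is genuine and must be dealt with, which is what the paper does in the final stage of its proof of \cref{thm:agile_character}: it splits the star case on whether many branches carry $\ge 2$ $X$-vertices (yielding $K_{2,k}$ directly) or only one each (yielding an internally $3$-connected minor after contractions, handled by the structure theorem).

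Second, the proposal has no substitute for the structural input that does most of the work. You gesture at a ``Menger-type argument'' producing $k$ internally disjoint paths between two vertices and hence a $K_{2,k}$-minor, or else a tree-decomposition of bounded adhesion \emph{and bounded torso size}. Neither half of this dichotomy is justified: having many paths with endpoints meeting $X$ does not by itself give a $K_{2,k}$-minor, and more importantly there is no reason to expect bounded torso size. The paper's decomposition (\cref{lem:seps_nested} plus \cite{confing}*{Theorem 4.8}) only gives $3$-connected torsos of a priori unbounded size, and the case where a torso absorbs a large portion of the agile set is handled by Ding's theorem (\cref{thm:Ding}) characterising internally $3$-connected $K_{2,k}$-minor-free graphs as bounded augmentations by fans and strips, followed by \cref{fact:agile_fan}, \cref{lem:crossing_chords_regular}, and \cref{lem:agile_strips}. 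Without an ingredient of this strength, the step from ``no $K_{2,k}$-minor'' to a usable decomposition does not go through. Your path-like case (pinning down the adhesion pattern to that of a regular strip) is at least in the spirit of \cref{lem:sep_sequence}, but that lemma requires a careful analysis via crosses (\cref{lem:cross_free}, \cref{lem:cros_wfree}) and the free/weakly-free/restrictive trichotomy, none of which is visible from the sketch.
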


Moreover, we will also consider a possible generalization of the notion of agile sets which we call $k$-agile sets and will qualitatively characterize the existence of these sets in graphs.

This paper is structured as follows: After giving the formal definitions required for this paper in \cref{sec:prelims}, we start in \cref{sec:small} with analysing \cref{question:agile_k2n} for small $k$. We show that the answer to this question is `yes' for $k\le 4$, but `no' for $k>4$, due to one specific construction of a counterexample. In \cref{sec:characterizing} we then show \cref{thm:agile_character}, proving that this type of counterexample is essentially the only one.
In the last section of this paper, \cref{sec:generalizations}, we look at a possible generalization of the notion of agile sets.

\section{Preliminaries}\label{sec:prelims}
\subsection{Graph-theoretic notation}
We follow the graph-theoretic notions from the textbook of Diestel \cite{DiestelBook16noEE}.
In what follows, we briefly recap some important definitions which we need.

An (oriented) \emph{separation of a graph $G$} is an ordered pair $(A,B)$ of subsets of the vertex
set $V(G)$ with the property that $A\cup B=V$ and there is no edge in $G$ from
$A\sm B$ to $B\sm A$. The \emph{order} of a separation is $\abs{A\cap B}$.

Separations come with a natural partial order: $(A, B) \le (C,D)$ if and only if $A \subseteq C$ and $B \supseteq D$.
The \emph{involution} is the function which maps every separation $(A,B)$ to $(B,A)$.

Two separations $(A,B)$ and $(C,D)$ are \emph{nested} if $ (A, B) \le (C, D)$ or $(A, B) \le (D, C)$ or $(B, A) \le (C,D) $, or $(B, A) \le (D, C)$.

A \emph{tree-decomposition} of a graph $G$ is a pair $(T, \cV)$ of a tree $T$ together with a family $\cV = (V_t \mid t \in T)$ of subsets of $V(G)$ called \emph{parts} such that
\begin{enumerate}[label=(T\arabic*)]
    \item $\bigcup_{t\in T} V_t = V(G)$;
    \item for every edge $xy \in E(G)$ there exists some $t \in T$ with $\{x, y\} \subseteq V_t$;
    \item for all $t, t'' \in T$ and all $t'$ on the unique $t$--$t''$-path in $T$ we have 
    $ V_t \cap V_{t''} \subseteq V_{t'} $.
\end{enumerate}

Given an orientation of an edge $tt'$ in the tree of such a tree-decomposition, let $T_t$ and $T_{t'}$ be the components of $T- tt'$ containing $t$ and $t'$, respectively. Then we call \[
    \big(\bigcup_{s\in T_t} V_s,\, \bigcup_{s\in T_{t'}} V_s\big)
\] the separation of $G$ that is \emph{induced} by $tt'$.
Given a tree-decomposition of $G$, the set of all induced separations forms a set of pairwise nested separations of $G$.

Conversely, every set $\cN$ of pairwise nested separations which is closed under involution is the set of induced separations of some tree-decomposition. This is a consequence of Theorem~4.8 from~\cite{confing}:

\begin{LEM}[cf.\ \cite{confing}*{Theorem 4.8}]
    Let $\cN$ be a set of pairwise nested separations of some finite graph $G$ which is closed under involution, then there exists
    a tree-decomposition $(T, \cV)$ of $G$ such that $\cN$ is the set of induced separations of $(T,\cV)$.
\end{LEM}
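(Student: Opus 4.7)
My plan is to apply Theorem~4.8 of~\cite{confing}, which works at the abstract level of nested separation systems, to extract a tree $T$ from~$\cN$, and then to equip~$T$ with parts defined in terms of the correspondence between nodes of $T$ and consistent orientations of $\cN$. The cited result yields a tree $T$ whose edges correspond bijectively to the unordered pairs $\{(A,B),(B,A)\}$ with $(A,B)\in\cN$, and whose nodes correspond to certain consistent orientations (``splitting stars'') of $\cN$.

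Given such a tree~$T$, for each node $t\in T$ I would consider the orientation $\vec t$ of $\cN$ obtained by orienting each pair $\{(A,B),(B,A)\}$ as $(A,B)$ precisely when $t$ lies in the component of $T-e$ assigned to the $A$-side, where $e$ is the edge corresponding to this pair. This orientation is consistent by the cited theorem, and I would then set
\[
V_t \;:=\; \bigcap_{(A,B)\in\vec t} A.
\]

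The next step is to verify (T1)--(T3). For (T1), given $v\in V(G)$, one orients each pair of $\cN$ toward any side containing~$v$; nestedness makes this a consistent orientation of $\cN$, so by the cited theorem it arises from some node $t$, and $v\in V_t$ follows by construction. For (T2), an edge $xy\in E(G)$ cannot cross any $(A,B)\in\cN$, so both endpoints lie on a common side of every separation, and the argument of (T1) produces a node $t$ with $\{x,y\}\sub V_t$. For (T3), take $v\in V_t\cap V_{t''}$ and $t'$ on the $t$--$t''$-path in~$T$; for any $(A,B)\in\vec{t'}$ with corresponding edge $e$, either $e$ lies off the $t$--$t''$-path, in which case $t$ also lies on the $A$-side and $v\in A$ follows from $v\in V_t$, or $e$ lies on the path, in which case $t$ and $t''$ lie on opposite sides of $e$, placing $v$ in $A\cap B\sub A$.

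Finally, I would verify that the set of induced separations of $(T,\cV)$ recovers $\cN$. For the edge $e=tt'$ corresponding to $(A,B)\in\cN$ the inclusions $\bigcup_{s\in T_t}V_s\sub A$ and $\bigcup_{s\in T_{t'}}V_s\sub B$ are immediate from the construction; the reverse inclusions use (T1) applied to a suitable orientation that places the target vertex on the prescribed side. The main obstacle I anticipate is the bookkeeping required to translate cleanly between the abstract separation-system framework of~\cite{confing} and the concrete graph-theoretic language: one must confirm that ``consistent orientations'' in the sense of the cited theorem coincide with the geometric orientations used above, and that the ``splitting stars'' supplied by the theorem indeed yield a tree whose edges are exactly the unordered pairs of $\cN$.
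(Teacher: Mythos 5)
The paper itself gives no proof of this lemma; it is presented as an immediate consequence of \cite{confing}*{Theorem~4.8}, with the translation left to the reader. Your proposal supplies a plausible sketch of that translation: take the tree furnished by the abstract theorem, define $V_t$ as the intersection of the sides of the separations in $\cN$ that contain $t$, and verify the tree-decomposition axioms and the induced separations. The architecture is sound, and your verifications of (T3) and of the two inclusions for the induced separations are essentially correct. However, the step you use for (T1) has a real gap. You assert that orienting each pair of $\cN$ ``toward any side containing $v$'' yields a consistent orientation ``by nestedness,'' but this is not automatic: when $v$ lies in the separator $A\cap B$ of several separations, an arbitrary choice among the sides containing $v$ can easily be inconsistent (already for a star of three separations of order $1$ whose separators all contain $v$, picking the small side each time gives three separations pairwise pointing away from each other). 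What one actually needs is that the forced partial orientation $\{(A,B)\in\cN : v\notin B\}$ is consistent and, crucially, extends to a consistent orientation of all of $\cN$. That extendability is a genuine structural fact about nested separation systems, not a triviality of nestedness, and it must either be proved or invoked from the literature. The same issue recurs in your argument that every $v\in A$ lies in some $V_s$ with $s\in T_t$, where you must extend a consistent partial orientation forcing $(A,B)$. You also need to confirm that the cited theorem's correspondence between nodes and splitting stars does give that every consistent orientation of $\cN$ is $\vec t$ for a unique node $t$ of the finite tree -- true, but again a step rather than a restatement. None of this is a wrong turn, but as written the proposal elides exactly the bookkeeping that it itself identifies as the main obstacle.
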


A \emph{torso} of a tree-decomposition $(T, \cV)$ of $G$ is obtained, for some $t\in T$, from the induced subgraph $G[V_t]$ by making each of the sets $V_t \cap V_{t'}$ complete for every $t' \in T - t$.

\subsection{Basic Lemmas about agile sets}
We can now make some basic observations about the connectivity structure of
graphs which contain agile set.
We start with a lemma, which will allow us to assume that the graphs we consider are $2$-connected:
\begin{LEM}\label{2-connected}
If $G$ is a graph containing an agile set $X$ of size $l\ge 4$, then either $G$ 
is $2$-connected or $G$ contains a proper subgraph still containing an agile 
set 
of size $l$.
\end{LEM}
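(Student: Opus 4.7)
The plan is to split on how connectivity fails: $G$ disconnected versus $G$ connected with a cut vertex. The universal tool is a \emph{tree-restriction principle}: if $H$ is either a component of $G$ or a subgraph of the form $G[C \cup \{v\}]$ for some component $C$ of $G - v$ at a cut vertex $v$, then for any tree $T \sub G$, the subgraph of $T$ induced on $V(T) \cap V(H)$ is a subtree. The reason is that the unique $T$-path between two vertices of $V(T) \cap V(H)$ cannot leave and re-enter $H$: leaving would force the path to pass through the separator $v$, and re-entering would force it through $v$ again, contradicting the tree property. If $G$ is disconnected, then the trivial partition $X_1 := X$, $X_2 := \emptyset$ demands a single tree covering all of $X$, so $X$ lies in one component $C$; applying the restriction principle to the trees of an arbitrary partition shows that $X$ remains agile in the proper subgraph $G[C]$.

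Now assume $G$ is connected with a cut vertex $v$, and let $C_1, \dots, C_r$ ($r \ge 2$) be the components of $G - v$, with $G_i := G[C_i \cup \{v\}]$. The easy subcase is $X \sub V(G_i)$ for some $i$, which is settled by the restriction principle. Otherwise $X$ meets at least two of the $C_i$, and the first structural claim to prove is that at most one $C_i$ can contain two vertices of $X$: if $C_i$ and $C_j$ each contained two $X$-vertices, picking $a, a' \in X \cap C_i$ and $b, b' \in X \cap C_j$ and partitioning with $\{a, b\}$ on one side and $\{a', b'\}$ on the other yields a contradiction, since both trees would have to pass through $v$. Combined with $l \ge 4$, a short case analysis using partitions that pair up vertices in different singleton components then rules out both $v \in X$ and the existence of any third component meeting $X$, pinning down the distribution to exactly two components $C_1, C_2$ with $X \cap C_2 = \{y\}$ for a single vertex $y$ and $|X \cap C_1| = l - 1 \ge 3$.

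For $r \ge 3$ the induced subgraph $G[C_1 \cup C_2 \cup \{v\}]$ is proper, contains $X$, and inherits agility by the restriction principle. The main obstacle is the case $r = 2$, where $G$ itself equals $G[C_1 \cup C_2 \cup \{v\}]$ and no plain deletion yields a proper subgraph containing $X$; the idea here is to swap $y$ for $v$ in the agile set, setting $Y := (X \cap C_1) \cup \{v\}$, a set of size $l$ inside the proper subgraph $G_1$. To verify $Y$ is agile in $G_1$, I take a partition $Y = Y_1 \cupdot Y_2$, say with $v \in Y_1$, form the induced $X$-partition $X_1 := (Y_1 \sm \{v\}) \cup \{y\}$, $X_2 := Y_2$, and feed it into the agility of $X$ in $G$; whenever $Y_1 \sm \{v\}$ is nonempty, the resulting tree covering $X_1$ must contain $v$ (to link a vertex in $C_1$ to $y \in C_2$), so the restriction principle turns the pair into the required disjoint trees in $G_1$. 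The degenerate subcase $Y_1 = \{v\}$ is handled directly by taking the trivial tree $\{v\}$ together with any subtree of the connected graph $G[C_1]$ covering $Y_2$, and the case $v \in Y_2$ is symmetric.
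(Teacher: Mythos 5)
Your proposal is correct and uses essentially the same core idea as the paper: isolate a component $C_1$ of $G-v$ containing all but at most one vertex of $X$, then swap the outlying vertex of $X$ for the cut vertex $v$ and show the swapped set is agile in $G[C_1\cup\{v\}]$. You spell out more machinery (the tree-restriction observation, the split between $r\ge 3$ and $r=2$, and the explicit ruling out of $v\in X$ and of a third component meeting $X$), whereas the paper compresses all of this into the single claim that some $C_1$ satisfies $|X\smallsetminus C_1|\le 1$ and then performs the same swap; the extra case $r\ge 3$ you handle by deletion is subsumed in the paper by the swap argument applied uniformly.
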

\begin{proof}
Suppose $G$ is not $2$-connected and let $x$ be a vertex such that the set 
$\{C_1,\dots,C_k\}$ of components of $G-x$ has size at least $2$.

Then there is one component, say $C_1$, such that all but at most one
vertex from $X$ lies in $C_1$: otherwise pick four vertices $v_1,v_2,w_1,w_2$ 
so that neither $v_1$ and $v_2$ nor $w_1$ and $w_2$ lie in the same 
component 
of $G-x$. Then there are no two disjoint trees $T_1,T_2$ in $G$ 
such that $T_1$ contains $v_1$ and $v_2$ and $T_2$ contains $w_1$ and $w_2$, 
as both 
of these trees would need to contain $x$, since every path from $v_1$ to $v_2$ 
uses $x$ and also every path from $w_1$ to $w_2$ uses $x$. 

Moreover, if $v$ is the only vertex in $X\setminus C_1$, the set $X-v+x$ is 
again an agile set. It is easy to see that this set is also an agile set of the 
proper subgraph $C_1\cup \{x\}$ of $G$, thus this subgraph contains an agile 
set 
of size $l$.
\end{proof}
While this lemma will later allow us to essentially assume that our graph 
containing an agile set is $2$-connected, we can, perhaps surprisingly, show 
something similar 
for larger connectivity. Of course, not every graph containing a large agile 
set 
is $3$-connected, as for example $K_{2,k}$ is not, but an agile set still 
behaves nicely with respect to separations of order $2$ or larger. Namely, we 
observe the following:
\begin{OBS}\label{fact:agile_seps}
 Let $G$ be a graph, $(C,D)$ a separation of $G$ and $X$ an agile set contained 
in $G$. 
Then $X\cap C$ and $X\cap D$ are agile in the corresponding torsos, i.e.\ in 
the 
graphs obtained from $G[C]$ and $G[D]$ by making $C\cap D$ complete. 
\end{OBS}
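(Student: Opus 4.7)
The plan is to reduce the agility of $X \cap C$ in the torso $G_C$ (the graph $G[C]$ with $C \cap D$ made into a clique) back to the agility of $X$ in $G$. Fix any partition $X \cap C = Y_1 \cupdot Y_2$ inside $G_C$, and extend it to a partition $X = X_1 \cupdot X_2$ of all of $X$ by distributing the vertices of $X \cap (D \sm C)$ arbitrarily between the two parts. Applying the agility of $X$ in $G$ to this extended partition yields vertex-disjoint trees $T_1', T_2' \subseteq G$ with $T_i' \supseteq X_i$; the remaining task is to convert these into suitable trees inside $G_C$.

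The natural candidates are the restrictions $F_i := T_i'[C]$, i.e.\ the subgraphs of $T_i'$ obtained by deleting all vertices in $D \sm C$ together with their incident edges. Each $F_i$ is a forest containing $Y_i$, but it may well be disconnected. The central claim I would establish is that every component $K$ of $F_i$ either equals all of $F_i$ or meets the separator $C \cap D$: for if $V(K) \subseteq C \sm D$, then the separation property---no edges between $C \sm D$ and $D \sm C$---forces every $T_i'$-neighbour of every vertex of $K$ to lie in $C$ and hence in $V(F_i)$, so $V(K)$ is closed under $T_i'$-adjacency, and connectedness of $T_i'$ then gives $K = F_i$.

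Armed with this claim, for each component $K$ of $F_i$ pick a representative $v_{i,K} \in V(K) \cap (C \cap D)$ (no choice is needed if $F_i$ is already connected), and link the chosen representatives by a path using edges of the clique that $G_C$ imposes on $C \cap D$. Since a $k$-component forest becomes a tree on addition of any $k-1$ suitable connecting edges, the resulting $T_i$ is a tree in $G_C$ containing $Y_i$, and $T_1, T_2$ are vertex-disjoint because $T_1', T_2'$ are and no new vertices are introduced. The symmetric construction on the $D$-side yields agility of $X \cap D$ in the other torso.

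The main obstacle I anticipate is precisely the component-meets-separator claim: this is the one step where the separation axiom is genuinely used, and it is what makes the restrict-and-reconnect scheme work. Everything else---the arbitrary extension of the partition from $X \cap C$ to $X$, the edge count confirming that the reconnected graph is a tree, and inheriting vertex-disjointness from $T_1', T_2'$---is routine bookkeeping.
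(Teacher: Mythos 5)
Your proof is correct and follows essentially the same strategy as the paper: extend the partition of $X\cap C$ to a partition of $X$, obtain disjoint trees in $G$ from the agility of $X$, and then restrict to $C$ and reconnect inside the torso using the clique on $C\cap D$. The paper compresses your component-meets-separator argument into a single "clearly" when asserting that $T_i\cap C$ induces a connected subgraph of the torso, and it extends the partition in a specific way (sending all of $X\sm C$ into one class) rather than arbitrarily; both are immaterial differences, and your spelled-out justification of the reconnection step is the content the paper leaves implicit.
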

\begin{proof}
Let us show that $X\cap C$ is agile in the torso $H$ corresponding to $C$.
 Given any partition $X_1\cupdot X_2$ of $X\cap C$, we know that, since $X$ is 
agile, 
the pair $(X_1\cup (X\sm C),X_2)$ is independent. Thus, we find disjoint trees 
$T_1,T_2$ in $G$ so that $T_1$ contains $X_1\cup (X\sm C)$ and $T_2$ contains 
$A_2$. Now clearly $T_1\cap C$ and $T_2\cap C$ induce disjoint connected 
subgraphs of $H$ which contain $X_1$ and $X_2$ respectively. Thus, we find the 
desired trees inside these subgraphs.
\end{proof}

\section{\texorpdfstring{\Cref{question:agile_k2n}}{\autoref*{question:agile_k2n}} for small \texorpdfstring{$k$}{k}}\label{sec:small}
We will now give answers to \cref{question:agile_k2n} for small values of $k$, observing that 
the smallest value were \cref{question:agile_k2n} must be answered negatively is $k=5$.
The case $k = 2$ is covered by \cref{agile:k22}.
We now consider $k = 3$, where \cref{2-connected} is all that is needed to answer \cref{question:agile_k2n} positively:
\begin{PROP}
If $G$ does not contain a $K_{2,3}$-minor, then $G$ cannot contain an agile 
set 
of size $5$.
\end{PROP}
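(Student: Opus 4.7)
The plan is induction on $\abs{V(G)}$, proving the contrapositive: every $2$-connected graph with an agile set of size $5$ has $K_{2,3}$ as a minor. \cref{2-connected} lets us assume $G$ is $2$-connected; let $X$ denote the agile set of size~$5$ in a minimal such counterexample.

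If $G$ is even $3$-connected, then $\abs{V(G)}\ge 5$, and either $G$ is complete---in which case $K_{2,3}$ sits as a subgraph---or $G$ contains a non-adjacent pair $u,v$. In the latter case Menger's theorem supplies three internally disjoint $u$-$v$ paths, each of length at least two by non-adjacency, and together they form a $K_{2,3}$-subdivision, contradicting our assumption on $G$.

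Otherwise $G$ admits a proper $2$-separation $(C,D)$ with $C\cap D=\{x,y\}$. The torsos $H_C=G[C]+xy$ and $H_D=G[D]+xy$ are minors of $G$---the new edge is realised by contracting an $x$-$y$-path on the opposite side, which exists by $2$-connectivity---so both are themselves $K_{2,3}$-minor-free. By \cref{fact:agile_seps}, $X\cap C$ and $X\cap D$ are agile in $H_C$ and $H_D$ respectively. If $X$ lies entirely on one side, that torso is a strictly smaller counterexample violating minimality; hence $X$ meets both $C\sm D$ and $D\sm C$, which forces $\abs{X\cap C},\abs{X\cap D}\le 4$.

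The main obstacle is to assemble a $K_{2,3}$-minor in $G$ from the two smaller agile sets living on either side of the separation. When one side, say $C$, carries at least four vertices of $X$, the plan is to apply \cref{agile:k22} inside $H_C$: since $H_C$ is non-outerplanar yet $K_{2,3}$-minor-free, it has a $K_4$-minor, which in particular contains a $K_{2,2}$-subgraph whose bipartition can be chosen so that $x$ and $y$ lie on a common side; a fifth branch set built from a vertex $v\in X\cap(D\sm C)$ together with internally disjoint $v$-$x$ and $v$-$y$ paths through $G[D]$ then upgrades this $K_{2,2}$ to the desired $K_{2,3}$-minor of $G$. The subtlest subcase is the balanced split $\abs{X\cap C}=2$, $\abs{X\cap D}=3$ (or its mirror) with $X\cap\{x,y\}=\emptyset$, when neither side alone passes the threshold of \cref{agile:k22}; here one must exploit several agile partitions of $X$ across $(C,D)$ simultaneously to extract the third ``rib'' of the $K_{2,3}$.
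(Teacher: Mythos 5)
Your approach is genuinely different from the paper's: you induct on $\abs{V(G)}$ and handle $3$-connectivity and $2$-separations separately, whereas the paper gives a direct, non-inductive argument. Your $3$-connected case is fine (Menger plus non-adjacency gives a $TK_{2,3}$). But the $2$-separation half of your plan contains real gaps that you acknowledge but do not close, and they are not easily closed along the route you sketch.

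Concretely, when $\abs{X\cap C}\ge 4$ you claim that the $K^4$-minor of $H_C$ ``contains a $K_{2,2}$-subgraph whose bipartition can be chosen so that $x$ and $y$ lie on a common side.'' There is no reason for $x$ and $y$ to lie in distinct branch sets of the $K^4$-minor at all --- they may share a branch set, or one or both may avoid every branch set --- so the fifth ``rib'' you build out of $v\in X\cap(D\sm C)$ has nowhere to attach. And the balanced cases $\abs{X\cap C}\in\{2,3\}$ are left entirely open: ``one must exploit several agile partitions simultaneously'' is a placeholder, not an argument. These are precisely the cases an inductive proof must address and yours does not.

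The paper avoids all of this by never splitting the graph. It observes that a $2$-connected $G$ with an agile set of size $\ge 4$ is not outerplanar, hence (being $K_{2,3}$-minor-free) has a $K^4$-minor. Since $\Delta(K^4)=3$ this is in fact a topological minor, and --- this is the step you are missing --- any subdivision of $K^4$ other than $K^4$ itself already contains $K_{2,3}$ as a minor, so in fact $K^4\subseteq G$. Then $2$-connectivity forces $G=K^4$: any extra vertex would send two disjoint paths into the $K^4$, again producing a $K_{2,3}$-minor. Since $K^4$ has only four vertices, it carries no agile set of size $5$. Noting this, you could discard the whole induction: your $\abs{X\cap C}\ge4$ observation applied to $G$ itself, combined with the ``proper $TK^4$ has a $K_{2,3}$-minor'' fact, already finishes the proof without touching a $2$-separation.
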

\begin{proof}
We may assume by \cref{2-connected} that $G$ is $2$-connected.
 
Since a graph is outerplanar if and only if the graph contains neither 
$K_{2,3}$ nor $K^4$ as a minor, we may suppose that $G$ is either 
outerplanar or contains $K^4$ as a minor. Thus, as by \cref{agile:k22} no 
outerplanar graph contains an agile set of size $5$, we may suppose that $G$ 
contains $K^4$ as a minor. Thus, (since $\Delta(K^4)=3$,) $G$ contains $K_4$ 
as a topological minor. Since every $TK^4$ not equal to $K^4$ contains 
$K_{2,3}$ as a minor, we therefore may suppose that $K^4\subseteq G$.

If there is any other vertex $v\in G$, there are two disjoint paths from $v$ to 
this $K^4$, since $G$ is $2$-connected. However, the $K^4$ together with $v$ 
and 
these $2$ paths again include $K_{2,3}$ as a minor, thus there cannot be such a 
vertex. 

Thus, $G$ would need to be equal to $K^4$, which does not contain an 
agile set of size $5$. Thus, every graph containing an 
agile set of size $5$ must contain $K_{2,3}$ as a minor.
\end{proof}
For $k = 4$ also, a positive answer to \cref{question:agile_k2n} can be given:
\begin{PROP}\label{prop:agileK24}
There exists an $m$ such that, if $G$ does not contain a $K_{2,4}$-minor, then 
$G$ cannot contain an agile set of size $m$.
\end{PROP}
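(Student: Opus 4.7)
The plan is to proceed in the spirit of the $k = 3$ proof, but using deeper structural information available for $K_{2,4}$-minor-free graphs. By \cref{2-connected} we may assume that $G$ is $2$-connected. We then decompose $G$ along its $2$-separations by the canonical tree-decomposition whose torsos are $3$-connected, cycles, or single edges. Two facts make this decomposition amenable to our problem: first, by \cref{fact:agile_seps} the intersection of an agile set $X$ of $G$ with the vertex set of any torso is still agile in that torso; second, since $G$ is $2$-connected, every virtual edge added to a torso can be realised by an internally disjoint path through the other side of the corresponding $2$-separation, so each torso is itself $K_{2,4}$-minor-free.

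Next, we aim to bound the size of an agile set in each individual $3$-connected $K_{2,4}$-minor-free torso by a universal constant $c$. For this we invoke a structural classification of the $3$-connected $K_{2,4}$-minor-free graphs. The key point is that, up to finitely many sporadic examples, these graphs form a very restricted family — essentially wheels, prisms, and related planar pieces — in which a direct case analysis (partitioning a would-be agile set and exhibiting a bad bipartition) rules out agile sets of size exceeding $c$.

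Finally, we must control how the agile set is distributed across the tree-decomposition. We bound the number of torsos containing vertices of $X$ outside their adhesion sets: more than boundedly many such torsos would allow us, by routing paths through the decomposition tree, to combine $2$-separators and chosen agile vertices into four disjoint branch sets that together with two distinguished $2$-separator vertices form a $K_{2,4}$-minor in $G$, contradicting the hypothesis. Multiplying the per-torso bound $c$ with this bound on the number of contributing torsos then yields the desired overall constant $m$.

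The main obstacle is the structural step: we need a classification of $3$-connected $K_{2,4}$-minor-free graphs strong enough to bound their agile sets. A secondary obstacle is the lifting argument in the final paragraph, which must transfer the local agility into a global $K_{2,4}$-minor via the tree-decomposition; this in essence extends the $2$-separator trick of \cref{fact:agile_seps} from a single cut to a whole chain of cuts in the decomposition.
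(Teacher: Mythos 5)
Your plan is a direct structural proof --- precisely the route the paper explicitly declines, remarking that it ``would consist of a rather extensive case distinction.'' The paper instead proves \cref{prop:agileK24} as an immediate corollary of \cref{thm:agile_character}: apply the theorem with $k=4$ to get either a $K_{2,4}$-minor (done) or a regular strip of length $4$, and observe that such a strip itself contains $K_{2,4}$ as a minor. Your sketch does mirror the broad architecture of the paper's proof of \cref{thm:agile_character} (reduce to $2$-connected, decompose along a nested set of $2$-separations into $3$-connected torsos, bound per torso, then bound the spread across the decomposition tree), but the two steps you flag as obstacles are real gaps, and they are where essentially all of the work lies. The per-torso bound $c$ is not established: it does follow from the Ellingham--Marshall--Ozeki--Tsuchiya classification, but only after the case analysis you are trying to avoid, and the cleaner way to get it --- via Ding's theorem and \cref{lem:internally_3} with $k = 4$ --- already amounts to rebuilding most of the paper's machinery.

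The more serious issue is the final ``lifting'' step, which as described would fail in one of the two cases it must handle. Your picture --- combining chosen vertices into ``four disjoint branch sets that together with two distinguished $2$-separator vertices form a $K_{2,4}$-minor'' --- works when many torsos share a common $2$-separator (the star-like case, where the two separator vertices can serve as the two hubs). It does not cover the case where the agile vertices are spread along a long path of \emph{nested} $2$-separations $(A_1,B_1) \le \dots \le (A_n,B_n)$: there is then no single pair of separator vertices to play the role of the two hubs, and the paper's treatment of this situation (\cref{lem:sep_sequence}) shows it need not produce a $K_{2,k}$-minor directly --- it can instead produce a regular strip. For $k = 4$ the regular strip of length $4$ does contain $K_{2,4}$, so the conclusion survives, but that is an extra observation you would have to make explicitly; it is not delivered by the two-hubs argument you sketch. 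Without distinguishing the path-like case and handling it separately, step four of your plan does not go through.
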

While \cref{prop:agileK24} can be proven directly using either the 
characterization of graphs without $K_{2,4}$-minor obtained by Ellingham, 
Marshall, Ozeki, and Tsuchiya \cite{K_24-Minor-free}, 
or a result by Dieng \cite{K_24} which states that every graph without a 
$K_{2,4}$-minor is obtained from an outerplanar graph by the addition of at 
most 
$2$ vertices, both of these proofs would consist of a rather extensive case 
distinction. Thus, we will not prove \cref{prop:agileK24} directly, instead 
it will turn up as corollary of \cref{thm:agile_character}.

For $k=5$ however, \cref{question:agile_k2n} must be answered negatively, 
as 
shown by the following counterexample:

\begin{COUNTEREX}\label{cex:noK25}
The set of red vertices in the following graph $G$ is agile, however $G$ does 
not contain a $K_{2,5}$-minor.
\begin{center}
\begin{tikzpicture}
\tikzstyle{hvertex}=[thick,circle,inner sep=0.cm, minimum size=6mm, 
fill=white, draw=black]
\tikzstyle{hedge}=[ultra thick]
	\def\npoints{9}
	\pgfmathtruncatemacro{\nminusone}{\npoints - 1}
	\pgfmathtruncatemacro{\nminustwo}{\npoints - 2}
	\foreach \i in {0,1,..., \npoints}{
		\node[hvertex,fill=red!60!white] (w\i) at (\i * 
10cm/\npoints,4cm){$r_{\i}$};
	}
	\foreach \i in {1,2,..., \nminusone}{
		\node[hvertex] (v\i) at (\i * 10cm/\npoints,2.5cm){$w_{\i}$};
	}
	\foreach \i in {1,..., \nminustwo}{
		\pgfmathtruncatemacro{\iplus}{\i + 1}
		\draw[hedge] (v\i) -- (v\iplus);
		\draw[hedge] (v\i) -- (w\iplus);
		\draw[hedge] (w\i) -- (v\iplus);
		\draw[hedge] (w\i) -- (w\iplus);
	}
	\draw[hedge] (w0) -- (w1);
	\draw[hedge] (w0) -- (v1);
	\draw[hedge] (w\nminusone) -- (w\npoints);
	\draw[hedge] (v\nminusone) -- (w\npoints);
\end{tikzpicture}
\end{center}
Formally this graph $G$ is constructed as follows: given some $n\in \N$, the 
vertex set of $G$ 
consists of the red vertices $r_0,r_1,\dots r_n$ and the white vertices 
$w_1,\dots w_{n-1}$. The edge set of $G$ consist of an edge between $r_i$ and 
$r_{i+1}$ for all $0\le i<n$, an edge between $w_i$ and $w_{i+1}$ for all $1\le 
i<n-1$ as well as an edge between any $w_i$ and $r_{i+1}$ and any $w_i$ and 
$r_{i-1}$ for all $1\le i<n-1$.
\end{COUNTEREX}
\begin{proof}
It is easy to see that the set $X=\{r_0,\dots,r_n\}$ of the red vertices in $G$ 
is agile:
given a partition $X = X_1 \dot\cup X_2$ we let $T_j = G[X_j \cup \{ w_i \mid r_i \notin X_j \}]$
for $j = 1,2$. Then $T_1$, $T_2$ are disjoint trees containing $X_1$ and $X_2$, respectively.

To see that $G$ does not contain a $K_{2,5}$-minor, suppose for a contradiction that $G$ does contain a $K_{2,5}$-minor. Then we can also find such a minor 
so that the 
branch set of every vertex of degree $2$ in $K_{2,5}$ consists of only a single 
vertex of $G$. Let us denote the branch sets of the vertices of degree $5$ of 
such a $K_{2,5}$-minor in $G$ by $H_1$ and $H_2$.

Now consider the set $I$ of those $i\in \{0,\dots,n\}$ for which $r_i$ or 
$w_i$ corresponds to one of the vertices of degree $2$ in $K_{2,5}$. By 
pigeonhole 
principle, one of $H_1$ and $H_2$ contains, for at least three 
distinct $i\in I$, neither $r_i$ nor $w_i$. Let us suppose without loss of 
generality that $H_1$ does so and let us denote three such $i\in I$ where 
$H_1$ contains neither $r_i$ nor $w_i$ as 
$i_1<i_2<i_3$. Now the set $\{r_{i_2},w_{i_2}\}$ disconnects every $r_j,w_j$ 
with $j<i_2$ from every $r_k,w_k$ with $k>i_2$. Therefore, as both, one of 
$r_{i_1},w_{i_1}$ and one of $r_{i_3},w_{i_3}$ correspond to one of the 
vertices of degree $2$ in the $K_{2,5}$, both $\{r_{i_1},w_{i_1}\}$ and 
$\{r_{i_3},w_{i_3}\}$ are adjacent to $H_1$. But, since $H_1$ 
is disjoint from $\{r_{i_2},w_{i_2}\}$, this contradicts the fact that $H_1$ is 
connected, as $H_1$ would need to 
meet two components of $G-r_{i_2}-w_{i_2}$. 
\end{proof}

%
%
%

\section{Proof of \texorpdfstring{\cref{thm:agile_character}}{the main result}}\label{sec:characterizing}
We will be able to prove that \cref{cex:noK25} is effectively the only counterexample to \cref{question:agile_k2n}. More precisely, in this section we prove the following:
\agilecharacter*

To prove this, we will rely heavily on a result by Ding which characterizes 
the graphs not containing $K_{2,k}$ as a minor \cite{DingK2n}. Consequently, we 
need the following definitions by Ding \cite{DingK2n}:

We say that a graph $G$ is \emph{internally 3-connected} if we can obtain $G$ 
from a 3-connected graph by subdividing each edge at most once.

A \emph{fan} is a graph $G$ which consists of a cycle $C$, three consecutive 
vertices $a,b,c\in C$ and additional edges between $b$ and some other 
vertices 
on $C$. These additional edges are called the \emph{chords} of the fan. The 
vertex $b$ is called the \emph{center} of the fan, and the vertices $a$, $b$, $c$ are 
the \emph{corners} of the fan. The \emph{length} of the fan is the number of 
chords.

Consider graphs $G$ obtained from a cycle $C$ containing two disjoint edges 
$ab$ 
and $cd$ by adding some edges between the two distinct paths in $C\sm 
\{ab,cd\}$. 
The 
added edges are called \emph{chords}. We say that chords $f_1f_2$ and $f_3f_4$ 
\emph{cross} if the four vertices $f_1,f_2,f_3,f_4$ are pairwise distinct, and 
they appear in the order $f_1,f_3,f_2,f_4,f_1$ along $C$. 
If given such a graph $G$ where every chord is crossed by at most one other 
chord, and where if two chords $f_1f_2$ and $f_3f_4$ cross, then either $f_1f_3$ and 
$f_2f_4$, or $f_1f_4$ and $f_2f_3$ are edges in $C$, we call $G$ a 
\emph{strip}. 
Moreover, we will also call any $H\in \{G-ab,G-cd,G-ab-cd\}$ a \emph{strip} if 
$H$ has minimum degree at least $2$. The \emph{corners} of such a strip are the 
vertices $a,b,c,d$ and the \emph{length} of the strip is the maximal size of a 
set of pairwise non-crossing chords with pairwise disjoint endpoints.

Given a graph $G$, \emph{adding} a fan or strip to $G$ shall mean that we
obtain a new graph out of the disjoint union of $G$ and a fan or strip by 
identifying the corners of the fan or strip with disjoint vertices from $G$. 

We say that a graph $H$ is an \emph{augmentation} of a graph $G$ if 
$H$ 
is obtained from $G$ by adding disjoint fans and strips in such a way that two 
corners of distinct fans and strips are only allowed to be identified with the 
same vertex of $G$ if one of them is the center of a fan, and the other one is 
either a corner of a strip, or also a center of a fan.

We denote, for $m\in \N$, as $\cA_m$ the class of all graphs that are 
augmentations of a graph with at most $m$ vertices, i.e.\ the class of all those
graphs $H$ for which there is a graph $G$ with at most $m$ vertices such that 
$H$ is an augmentation of $G$.

A \emph{regular strip} of length $k$ is the graph obtained from two 
disjoint paths $P_1\coloneqq v_1\dots v_k$, $P_2\coloneqq  w_1\dots w_k$ by adding an edge 
between $v_i$ and $w_{i+1}$ and $w_i$ and $v_{i+1}$ for every $1\le i<m$. This 
graph is depicted in the following image:
 \begin{center}
\begin{tikzpicture}
\tikzstyle{hvertex}=[thick,circle,inner sep=0.cm, minimum size=6mm, 
fill=white, draw=black]
\tikzstyle{hedge}=[ultra thick]
	\def\npoints{9}
	\pgfmathtruncatemacro{\nminusone}{\npoints - 1}
	\pgfmathtruncatemacro{\nminustwo}{\npoints - 2}
	\foreach \i in {1,..., \npoints}{
		\node[hvertex,fill=red!50!white] (w\i) at (\i * 
10cm/\npoints,4cm){$w_{\i}$};
	}
	\foreach \i in {1,..., \npoints}{
		\node[hvertex] (v\i) at (\i * 10cm/\npoints,2.5cm){$v_{\i}$};
	}
	\foreach \i in {1,..., \nminusone}{
		\pgfmathtruncatemacro{\iplus}{\i + 1}
		\draw[hedge] (v\i) -- (v\iplus);
		\draw[hedge] (v\i) -- (w\iplus);
		\draw[hedge] (w\i) -- (v\iplus);
		\draw[hedge] (w\i) -- (w\iplus);
	}
\end{tikzpicture}
\end{center}
Note that such a regular strip is a strip with corners $v_1,w_1,v_k,w_k$.

The result of Ding \cite{DingK2n} now states the following:
\begin{THM}[{\cite{DingK2n}*{Theorem 5.1}, rephrased}]\label{thm:Ding}
  For every $k \in \N$ there is some $m \in \N$ such that
  every internally 3-connected graph without a $K_{2,k}$-minor is contained 
in $\cA_m$.
\end{THM}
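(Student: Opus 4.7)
The plan is to prove \cref{thm:Ding} by strong induction on $|V(G)|$. For a threshold $m_0 = m_0(k)$ to be chosen later, graphs on at most $m_0$ vertices lie trivially in $\cA_{m_0}$, as the empty augmentation of themselves. For larger $G$ the goal is to locate a fan or strip $F \sub G$ whose "removal" (i.e.\ contraction back onto its corners) yields a smaller internally $3$-connected graph $G'$ that still avoids $K_{2,k}$; the inductive hypothesis then gives $G' \in \cA_m$, and reinserting $F$ extends the augmentation witnessing this to one for $G$.

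The main structural input from $K_{2,k}$-freeness is the following bound on branching: for any two vertex-disjoint connected subgraphs $H_1, H_2 \sub G$, at most $k-1$ components of $G - V(H_1) - V(H_2)$ can be adjacent to both of $H_1$ and $H_2$, as otherwise $H_1$, $H_2$ together with $k$ such components witness a $K_{2,k}$-minor. Applied to pairs $\{u,w\}$ of vertices this restricts the structure at every order-$2$ separation, and combined with internal $3$-connectedness it should force that long stretches of $G$ with few external attachments must take the form of fans or strips in the sense of Ding's definitions. I would quantify this by examining an appropriate tree-of-tangles-like decomposition of $G$ along small separations and arguing that if $|V(G)|$ exceeds $m_0(k)$, then a counting argument over the leaves of this decomposition produces at least one piece realisable as a fan or a strip with its corners being precisely its attachment vertices.

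The technical heart is the peeling step. Once an outermost such fan or strip $F$ is identified, removing $F$ down to its corners must preserve both internal $3$-connectedness and $K_{2,k}$-minor-freeness. Internal $3$-connectedness is obtained by choosing $F$ to be terminal, so its removal introduces no new small separations of $G'$. For $K_{2,k}$-minor-freeness, any hypothetical $K_{2,k}$-minor of $G'$ would need to be lifted to one of $G$ by rerouting branch sets through $F$; the very restrictive geometry of fans and strips (a single centre for a fan, chords crossing at most once in a strip, minimum-degree $2$ after edge-removal) is exactly what rules out such a lift and what forces these particular shapes to be the right building blocks.

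The main obstacle, and where the bound $m=m(k)$ emerges, is carrying the augmentation bookkeeping through the induction. The conditions governing when corners of distinct fans and strips may be identified with a common vertex of the core are not imposed externally but arise as the precise compatibility conditions between consecutive peelings: a shared corner between two fans forces one of them to be centred there, a shared corner between a fan and a strip forces the fan to be centred there, and so on. Showing that the peeling terminates with a core graph on at most $m_0$ vertices while respecting all these compatibility constraints simultaneously is the technically heaviest part of the argument, and is where Ding's explicit case analysis in \cite{DingK2n}*{Section 5} does the real work.
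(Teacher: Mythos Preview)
This theorem is not proved in the paper at all: it is quoted from Ding's work \cite{DingK2n} as a black box, and the paper's contribution is to \emph{apply} it (in \cref{lem:internally_3} and the proof of \cref{thm:agile_character}), not to reprove it. There is therefore no proof in the present paper to compare your proposal against.

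As for the proposal itself, it is a plausible high-level narrative for how a structure theorem of this type might be organised, but it is not a proof. Several of the steps you describe as routine are in fact the entire content of the theorem: locating an outermost fan or strip whose corners are exactly its attachment set, verifying that peeling it off preserves internal $3$-connectedness, and---most critically---showing that the process terminates with a core of size bounded in $k$ alone rather than in $|V(G)|$. Your sketch does not supply any of these arguments; indeed, you explicitly defer the ``technically heaviest part'' to Ding's case analysis. That is fine as an acknowledgement that the result is being imported, but it means the proposal is a summary of what Ding's theorem says rather than an independent proof of it. If the intent was simply to invoke the theorem, a one-line citation (as the paper does) is the appropriate treatment.
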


This theorem will allow us to prove \cref{thm:agile_character}. Our proof 
strategy will be as follows: suppose we 
are given a graph $G$ which contains a large agile set. We will be able to show 
that, if $G$ does not contain a $K_{2,k}$-minor, then we find a minor of $G$ 
which is internally $3$-connected and still contains a relatively large agile 
set, hence we can essentially assume without loss of generality that $G$ is 
internally $3$-connected. We then can assume, using 
\cref{thm:Ding}, that $G$ lies in $\cA_m$, thus $G$ is an augmentation of a 
graph with at most $m$ vertices. Since every such 
augmentation is obtained by adding a bounded number of fans and strips, this 
will then imply that one of the fans or strips used for $G$ still contains a 
relatively large agile set, and we will be able to show that this is only 
possible for a strip, and that this strip will then contain a regular strip as 
a 
minor.

So let us first show that the graphs in $\cA_m$ are indeed constructed by 
adding 
only a bounded number of fans and strips:
\begin{OBS}\label{fact:bounded_strips}
 Every graph in ${\cal A}_m$ is obtained from a graph $G$ with at most $m$ 
vertices by adding at most $\frac{m}{2}$ fans and strips.
\end{OBS}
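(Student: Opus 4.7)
The plan is a vertex-counting argument on the graph $G$ underlying the augmentation. Given $H\in\cA_m$, fix such a $G$ with $|V(G)|\le m$ and write $f$ and $s$ for the number of fans and strips added; the goal is to show $f+s\le m/2$.

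First, I would parse the identification rule carefully. The definition of adding a fan or strip requires the corners of a \emph{single} fan or strip to land at distinct vertices of $G$; and for corners belonging to \emph{distinct} fans or strips, a common vertex of $G$ is permitted only when one of the two is a fan center and the other is a fan center or a strip corner. From this I would extract the crucial consequence: non-center fan corners and strip corners occupy pairwise distinct vertices of $G$. Concretely, two non-center fan corners of different fans may not collide (neither is a fan center); two strip corners of different strips may not collide (neither is a fan center); and a non-center fan corner may not collide with a strip corner of a different structure (again, neither is a fan center).

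Once this is established, the count is immediate. Each fan contributes two non-center corners, and each strip contributes four corners; by the previous paragraph these $2f + 4s$ corners are identified with pairwise distinct vertices of $G$, giving $2f + 4s \le m$ and hence $f + s \le f + 2s \le m/2$. The only place the argument needs care is the first step, namely the short case analysis on which pairs of corner types the augmentation rule forbids from being identified at a common vertex of $G$; everything after that is pigeonhole.
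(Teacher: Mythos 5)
Your proposal is correct and follows essentially the same argument as the paper: both hinge on the observation that non-center fan corners and strip corners land on pairwise distinct vertices of $G$, then count. The paper phrases it as ``every vertex of $G$ is a non-center corner of at most one fan or strip'' and uses the lower bound of $2$ non-center corners per structure to get $2(f+s)\le m$; your count $2f+4s\le m$ is a touch sharper but the idea is identical.
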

\begin{proof}
 Every vertex of $G$ is a non-center corner of a strip or fan for at most one 
such fan or strip. Since every strip and fan has at least $2$ non-center 
corners, this gives the desired bound.
\end{proof}

Our next goal is to show that, if an augmentation contains a large agile set, 
this large agile set cannot be contained in any of the fans used in the 
construction of 
this augmentation. Since the corners of such a fan separate the fan 
from the rest of the graph, this follows from \cref{fact:agile_seps} -- as soon 
as we establish the following:
\begin{OBS}\label{fact:agile_fan}
Let $G$ be a graph obtained from a fan by making the set of corners complete. 
Then $G$ cannot contain an agile set of size $7$.
\end{OBS}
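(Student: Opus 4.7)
The plan is to exploit that $b$ is the unique vertex of $G$ lying outside a certain Hamiltonian cycle, so that any partition class of an agile set whose tree avoids $b$ is forced to sit on that cycle. Concretely, making the corners $\{a,b,c\}$ complete amounts to adding only the single edge $ac$ to the fan, since $ab$ and $bc$ already lie on the fan's outer cycle $C = abcv_1\dots v_k a$. Consequently $G - b$ is precisely the cycle $C' = (C - b) + ac$, and every tree contained in $G - b$ is a subpath of $C'$.

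Suppose for contradiction that $X$ is an agile set with $\abs{X} \ge 7$. Since $V(G) \setminus V(C') = \{b\}$, at least six vertices of $X$ lie on $C'$; choose six of them, $x_1,\dots,x_6$, in cyclic order along $C'$. Partition $X = X_1 \cupdot X_2$ by placing $x_1,x_3,x_5$ into $X_1$ and $x_2,x_4,x_6$ into $X_2$, and assign the remaining vertex of $X$ (whether or not it is $b$) arbitrarily. Agility yields disjoint trees $T_1 \supseteq X_1$ and $T_2 \supseteq X_2$, and since $T_1$ and $T_2$ are disjoint, $b$ lies in at most one of them. By the symmetry between the two sides of our partition we may relabel so that $b \notin T_2$; then $T_2 \subseteq G - b = C'$, and so $T_2$ is a subpath of the cycle $C'$.

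To finish, I observe that the three vertices $x_2,x_4,x_6$ divide $C'$ into three arcs, each of whose interior contains exactly one of $x_1,x_3,x_5$. A subpath of $C'$ containing $x_2,x_4,x_6$ must fully include at least two of these three arcs, so it contains at least one of $x_1,x_3,x_5$. This contradicts $T_1 \cap T_2 = \emptyset$, because $\{x_1,x_3,x_5\} \subseteq T_1$.

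The only delicate point is the bookkeeping for the location of $b$: if $b \in X$ then we place $b$ into $X_1$ from the outset, guaranteeing $b \in T_1$ and hence $b \notin T_2$; if $b \notin X$ then $b$ may a priori lie in either tree, but the partition is fully symmetric under swapping the index sets $\{1,3,5\} \leftrightarrow \{2,4,6\}$, so relabelling is cost-free. All other steps are immediate from the structural description of a fan and the short arc-counting argument on $C'$.
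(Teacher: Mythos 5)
Your proof is correct, and it rests on the same underlying structural fact as the paper's — that $b$ is the unique vertex whose removal leaves a cycle $C'$, so any connected piece avoiding $b$ is an arc of $C'$. The paper exploits this more tersely: it picks only four non-corner vertices $v_1,\dots,v_4$ in cyclic order, partitions them alternately, and then observes that $\{b,v_2,v_4\}$ separates $v_1$ from $v_3$ (and symmetrically $\{b,v_1,v_3\}$ separates $v_2$ from $v_4$), forcing \emph{both} disjoint paths to pass through $b$ — an immediate contradiction, with no need to decide in advance which tree avoids $b$. Your version instead picks six vertices on $C'$, first argues that one of the two trees, say $T_2$, must miss $b$ and hence be an arc of $C'$, and then does arc-counting to show $T_2$ cannot contain $x_2,x_4,x_6$ without also containing one of $x_1,x_3,x_5$. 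Both are short and elementary; the paper's four-vertex version is a little slicker because it sidesteps the case analysis on whether $b\in X$ and the WLOG relabelling, but your explicit identification of $G-b$ as a cycle makes the structural reason for the bound transparent. Your bookkeeping on $b$'s location is handled correctly, so there is no gap.
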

\begin{proof}
Suppose $X$ is an agile set in $G$ of size at least $7$ and let us denote the 
center of that fan as $b$. Then there are $4$ vertices in $X$ which do not 
belong to the corners of the fan. Let us denote them as $v_1,v_2,v_3,v_4$ and 
assume that they lie in this order on the cycle $C$ used in the construction of 
$G$. 

Since $X$ is agile, there are disjoint paths $P_1$ from $v_1$ to $v_3$ 
and $P_2$ from $v_2$ to $v_4$. However, as $\{b,v_2,v_4\}$ together separate 
$v_1$ from $v_3$ in $G$ and $P_1$ can neither contain $v_2$ nor $v_4$, it 
needs to be the case that $b$ is contained in $P_1$. But similarly, 
$\{b,v_1,v_3\}$ separates $v_2$ from $v_4$ in $G$ and thus $b$ is contained in 
$P_2$, contradicting the fact that $P_1$ and $P_2$ are disjoint.
\end{proof}
\begin{COR}\label{lem:containing_fan}
 Let $G$ be a graph containing a fan $H$ as a subgraph such that the corners of 
$H$ separate the rest of $H$ from the rest of $G$. If $G$ contains an agile 
set $X$, then $X$ cannot contain more than $6$ vertices from $H$.
\end{COR}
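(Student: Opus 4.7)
The plan is to combine \cref{fact:agile_seps} with \cref{fact:agile_fan} applied to the separation induced by the corners of~$H$. Concretely, let $S$ denote the set of corners of~$H$ and consider the ordered pair $(C,D)$ with $C \coloneqq V(H)$ and $D \coloneqq (V(G)\sm V(H)) \cup S$. By hypothesis, $S = C \cap D$ separates $V(H)\sm S$ from $V(G)\sm V(H)$, and since $C \cup D = V(G)$ with no edges between $C \sm D$ and $D \sm C$, this is a separation of~$G$.

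Next I would invoke \cref{fact:agile_seps} on this separation: since $X$ is agile in $G$, the set $X \cap C = X \cap V(H)$ is agile in the corresponding torso. That torso is obtained from $G[V(H)] = H$ by making $S$ complete, which is precisely the graph formed from a fan by making its set of corners complete -- exactly the object considered in \cref{fact:agile_fan}.

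Applying \cref{fact:agile_fan} to this torso yields that it cannot contain an agile set of size~$7$. Hence $\abs{X \cap V(H)} \le 6$, as desired. The only minor subtlety is verifying that the torso really is of the form handled by \cref{fact:agile_fan}, i.e.\ that no additional structure needs to be checked beyond making the corners pairwise adjacent; but this is immediate from the definition of the torso, so no further work is needed.
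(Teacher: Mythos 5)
Your argument is exactly the paper's one-line proof spelled out: apply \cref{fact:agile_seps} to the separation at the corners and then \cref{fact:agile_fan} to the resulting torso. One tiny imprecision: you write $G[V(H)] = H$, but the hypothesis only forbids edges leaving $V(H)\sm S$, so $G[V(H)]$ could a priori have extra edges among the corners; since the torso makes $S$ complete anyway this makes no difference, and the conclusion stands.
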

\begin{proof}
This is immediate from \cref{fact:agile_seps} and \cref{fact:agile_fan}.
\end{proof}
Next, we would like to show, given an augmentation $G$ and a strip used in 
its augmentation process, that, if the vertices of that strip in $G$ contain a 
large agile set, then this strip needs to contain a large 
regular strip as a minor. 

For this, we first observe that a strip contains a regular strip as a minor as 
soon as the strip has enough pairs of crossing chords:
\begin{LEM}\label{lem:crossing_chords_regular}
 Let $G$ be a strip containing $k$ distinct pairs of crossing chords. Then $G$ 
contains a regular strip of length $k$ as a minor.
\end{LEM}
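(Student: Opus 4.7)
The plan is to show that the $k$ crossing pairs are naturally ordered along both paths $P_1$ and $P_2$ of the strip, and then to contract the intermediate path-segments between them to obtain a regular strip as a minor.

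Setup. Let $P_1$ and $P_2$ be the two internally disjoint paths forming the cycle $C$ of the strip $G$. Parametrize them by indices so that a chord $(i,j)$ connects the $i$-th vertex $u_i$ of $P_1$ to the $j$-th vertex $x_j$ of $P_2$, with parametrizations chosen so that two chords $(i,j)$ and $(i',j')$ cross if and only if $(i-i')(j-j') < 0$. By the strip definition, each crossing pair falls into one of two categories: the \emph{typical case}, where $|i-i'| = |j-j'| = 1$, or the \emph{corner case}, in which the crossing is witnessed by the corner edges $ab$ and $cd$ and the two chords must be $ad$ and $bc$. Since the corner case uses all four corners, it can occur for at most one of the pairs, so at least $k-1$ of the pairs are of the typical form, each described by indices $(i_\ell, j_\ell)$ with chords $(i_\ell, j_\ell+1)$ and $(i_\ell+1, j_\ell)$.

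Monotonicity. Order the typical pairs so that $i_1 < i_2 < \dots$. The key claim is that this forces $j_1 < j_2 < \dots$ as well. Suppose instead that $j_\ell \ge j_{\ell+1}$ for some~$\ell$. By inspecting the four possible cross-pair chord combinations between $\pi_\ell$ and $\pi_{\ell+1}$, and using that distinct chords sharing a vertex cannot cross while otherwise the index-product criterion applies, one locates a chord of $\pi_\ell$ that crosses a chord of $\pi_{\ell+1}$. That chord, however, already crosses its partner inside its own pair, contradicting the strip property that every chord is crossed by at most one other chord. The sub-cases where $\pi_\ell$ and $\pi_{\ell+1}$ share an endpoint on $P_1$ or $P_2$ require picking the \emph{other} chord of the relevant pair, but the same index-product computation yields the contradiction.

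Construction of the minor. With monotonicity in hand, for each $\ell$ contract the $P_1$-subpath from $u_{i_{\ell-1}+1}$ to $u_{i_\ell}$ into a single branch set (taking the leading and trailing segments of $P_1$ for $\ell = 1$ and $\ell = k$); do the analogous contraction on $P_2$ to produce branch sets for vertices $v_\ell$ and $w_\ell$. The strict monotonicity guarantees that these branch sets are disjoint and, being sub-paths of $P_1$ or $P_2$, connected. In the resulting minor, the $P_1$-edge $u_{i_\ell} u_{i_\ell+1}$ realises the rung edge $v_\ell v_{\ell+1}$, the $P_2$-edge $x_{j_\ell} x_{j_\ell+1}$ realises $w_\ell w_{\ell+1}$, and the two crossing chords of $\pi_\ell$ realise the diagonals $v_\ell w_{\ell+1}$ and $v_{\ell+1} w_\ell$. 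This is precisely the edge set of a regular strip, of length at least $k$; in particular $G$ has a regular strip of length $k$ as a minor.

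Main obstacle. The bulk of the proof effort is the monotonicity claim, which boils down to a short but careful case analysis driven by the ``at most one crossing per chord'' clause of the strip definition. Once that is settled, the regular-strip structure emerges directly from the definition of a crossing pair, and the corner case is absorbed by the fact that it trims away at most one of the $k$ pairs.
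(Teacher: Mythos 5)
Your proposal is correct and follows essentially the same route as the paper's proof: order the crossing pairs along the two paths of the strip, use the ``each chord is crossed by at most one other chord'' condition to show the two orderings agree, and then contract the intervening path segments to exhibit the regular strip. Your explicit treatment of the corner case (a crossing witnessed by the edges $ab$ and $cd$) is a nice extra precaution that the paper glosses over, but it does not change the argument.
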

\begin{proof}
Let $C$ be the cycle used in the construction of $G$, and let $ab$ 
and $cd$ be the two edges of $C$ for which we added chords between $C\sm 
\{ab,cd\}$. Let us denote as $P_1$ and $P_2$ the two paths together forming
$C\sm\{ab,cd\}$, where $P_1$ starts in $a$ and $P_2$ starts in $b$.

Let us denote the pairs of crossing chords as 
$\{v_{1}^iw_{1}^i,v_{2}^iw_{2}^i\}$, where $v_j^i$ is contained in $P_1$, and where we 
enumerate these pairs so that $v_1^i$ appears before both, 
$v_{2}^i$ and $v_1^{i-1}$ on $P_1$, for every $1\le i<k$. Then, since every 
chord in a strip crosses at most one other chord, we have that $v_2^i$ 
appears before $v_1^{i+1}$ on $P_1$ (or is identical to $v_1^{i+1}$) and that 
the relation between the $w_j^i$ 
on $P_2$ is such that $w_2^i$ appears before $w_1^i$ which appears before 
$w_2^{i+1}$ on $P_2$ (again, or that $w_1^i=w_2^{i+1}$), for every $1\le i<k$. 

Thus, suppressing every vertex on $P_1$ or $P_2$ which is not one of the 
$v_j^i$ or $w_j^i$ and then contracting every existing edge between $v_2^i$ and 
$v_1^{i+1}$ as well as every edge between $w_1^i$ and $w_2^{i+1}$ for every 
$1\le i<k$ gives the desired regular strip of length $k$.
\end{proof}
With this lemma at hand we can now show the following:
\begin{LEM}\label{lem:agile_strips}
 There is a function $j\colon\N\to\N$, namely $j(k)\coloneqq 22(k-1)+4k+1$, such that 
whenever the graph $G$ obtained from 
a strip $H$ by making the set of corners of $H$ complete contains an agile set 
of size $j(k)$, then $H$ contains a regular strip of length $k$ as a minor.
\end{LEM}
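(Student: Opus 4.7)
I would argue by contrapositive via \cref{lem:crossing_chords_regular}: assuming $H$ has at most $k-1$ pairs of crossing chords, I show that $G$ cannot contain an agile set of size $j(k)$. Each pair of crossing chords, involving four vertices $u_i, u_{i+1} \in P_1$ and $v_j, v_{j+1} \in P_2$ at consecutive positions, yields a separator of order $4$ in $H$: any chord that skipped past the crossing would be forced to cross both of its chords, violating the strip property that each chord is crossed by at most one other. Lifting this to a separation of $G$ (where the corners $\{a,b,c,d\}$ have been made pairwise adjacent) by placing the ``far-side'' corners into the separator yields a separation of order at most $6$ of $G$. These $\le k-1$ separations are linearly ordered and pairwise nested, decomposing $G$ into at most $k$ sections $S_0, \ldots, S_t$ ($t \le k-1$). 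By iterated application of \cref{fact:agile_seps}, the intersection $X \cap V(S_i)$ is an agile set of the corresponding torso $T_i$, which is a \emph{non-crossing} sub-strip of $H$ -- hence outerplanar, as all its vertices lie on the outer cycle and the non-crossing chords can be drawn inside without crossings -- augmented by at most two cliques at its boundary.

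The key sub-claim, and the main obstacle of the proof, is that each such torso $T_i$ admits no agile set of size larger than $22$. Since $T_i$ is outerplanar plus a bounded amount of boundary structure, I would further decompose $T_i$ along the order-$2$ separations given by each of its non-crossing chords; each resulting cell is essentially a short sub-path of the outer cycle with one or two boundary-clique edges attached, and its agile set size is bounded by a finite case analysis building on \cref{agile:k22} (a $K_{2,2}$-minor-free graph has no agile set of size $\ge 4$): the boundary-clique edges create only finitely many new $K_{2,2}$-configurations, so each cell contributes a bounded number of agile vertices, and summing over all cells in a section yields the desired per-section bound of $22$.

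With the per-section bound in hand, a careful accounting that correctly handles the double-counting of crossing-separator vertices shared between two adjacent sections (together with the corners that occur in multiple separators) gives $|X| \le 22(k-1) + 4k$, contradicting $|X| \ge j(k) = 22(k-1) + 4k + 1$. Hence $H$ contains at least $k$ pairs of crossing chords, and \cref{lem:crossing_chords_regular} then produces the regular strip of length $k$ as a minor.
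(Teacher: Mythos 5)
Your overall skeleton matches the paper's: invoke \cref{lem:crossing_chords_regular} to reduce to the case of at most $k-1$ crossing pairs, carve $G$ up along the small separators coming from those pairs, and bound the number of agile vertices falling into each piece. (The paper packages the decomposition slightly differently — it deletes the vertex set $X$ of all crossing-chord endpoints together with the corners and works with the components of $G - X$, rather than with a nested chain of order-4/6 separations — but the idea is the same, and your observation that a crossing pair yields an order-4 separator of the strip is correct.)

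The gap is precisely at the step you flag as ``the main obstacle.'' You propose to bound the agile set inside a section by decomposing that section further along the order-2 separations coming from its non-crossing chords, bounding each resulting cell's contribution by a finite case analysis, and ``summing over all cells in a section.'' This cannot give a constant per-section bound: a non-crossing section can contain arbitrarily many chords and hence arbitrarily many cells, and \cref{fact:agile_seps} only controls the agile set \emph{within} each cell torso — it does not stop the total from growing linearly with the number of cells. Something global is needed to rule out an agile set that picks up a few vertices from each of many cells. The paper supplies exactly this missing ingredient: each section torso is a minor of a ladder whose four end-vertices are made into a clique, and then one argues \emph{on the whole ladder at once}: in any agile set of size $11$ one rail of the ladder contains at least $6$ vertices, and partitioning those alternately along the rail yields a partition that cannot be connected disjointly. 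This bound is independent of the ladder's length, hence independent of how many chords the section has — that is the part your cell-by-cell accounting does not deliver.

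Two smaller points. First, the section torsos are \emph{not} outerplanar: the boundary cliques you add (a $K_4$ on the separator) destroy outerplanarity, so \cref{agile:k22} cannot be applied directly to a section; you already acknowledge the boundary cliques, so this is a slip in phrasing rather than a conceptual error, but it means the ``outerplanar plus a bounded amount of boundary'' framing needs to be replaced by an explicit bounded-width model such as the ladder. Second, your target constant $22$ per section is an artifact of trying to match the formula $j(k) = 22(k-1)+4k+1$; in the paper the $22$ arises differently, as $2\cdot 11$ to absorb the pigeonhole among roughly $2(k-1)$ components, each of which is shown to admit no agile set of size $11$.
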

\begin{proof}
 By \cref{lem:crossing_chords_regular}, if $H$ contains at least $k$ pairs of 
crossing chords, then $H$ contains a regular strip of length $k$ as a minor. So 
suppose that $H$ contains at most $k-1$ pairs of crossing chords. Let 
$X$ be the set of vertices incident with the edges of these chords together 
with the four corners of the strip. It is easy to see 
that $G-X$ contains at most $2(k-1)$ components, and that each of these 
components is either a path or a strip without crossing chords.
Moreover, each of these components is adjacent to at most $4$ vertices in $X$.
By the pigeonhole principle, one of these 
components needs to contain, since $j(k)>22(k-1)+4k$, at 
least $11$ vertices of our agile set. Let $Y$ be the vertex set of one 
such component, let $K'$ be the subgraph of $G$ induced on $Y\cup N(Y)$ and 
denote as $K$ the graph obtained from $K'$ by adding all edges between the 
vertices in $N(Y)$. Since $N(Y)$ separates $Y$ from the rest of $G$, it is, by 
\cref{fact:agile_seps}, enough to show that $K$ does not contain an agile set 
of 
size at least $11$.

The \emph{ladder of length $n$} is the graph on the set $[n]\times \{0,1\}$ 
where we add an edge between $(x,y)$ and $(x',y')$ precisely if 
$\abs{x-x'}+\abs{y-y'}=1$. The \emph{end vertices} of a ladder of length $n$ 
are the four vertices $(0,0),(0,1),(n-1,0)$ and $(n-1,1)$.

We claim that $K$ is a minor of a graph obtained from a ladder of large enough 
length by turning the set of the four end vertices of the ladder into a clique,
as depicted in \cref{fig:cycle_ladder}.
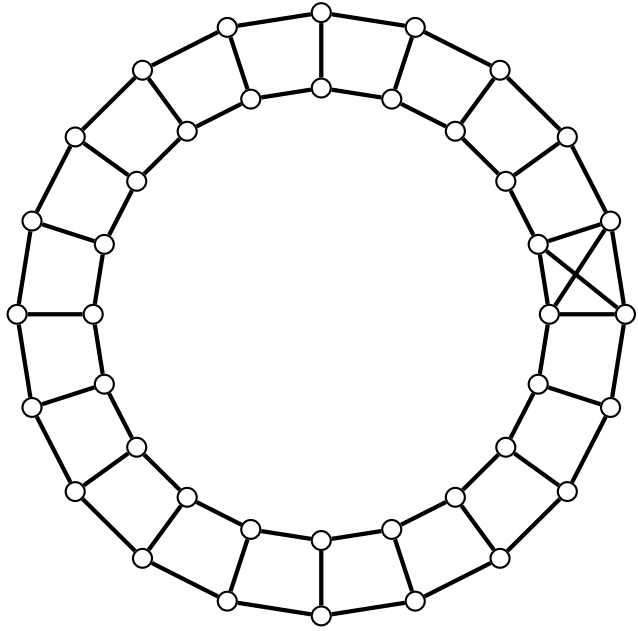
\begin{figure}[h]
 \begin{center}
\begin{tikzpicture}
\tikzstyle{hvertex}=[thick,circle,inner sep=0.cm, minimum size=2.5mm, 
fill=white, draw=black]
\tikzstyle{hedge}=[ultra thick]
	\def\npoints{20}
	\pgfmathtruncatemacro{\nminusone}{\npoints - 1}
	\pgfmathtruncatemacro{\nminustwo}{\npoints - 2}
	\foreach \i in {1,..., \npoints}{
		\node[hvertex] (w\i) at (\i * 
360/\npoints:3cm){};
	}
	\foreach \i in {1,..., \npoints}{
		\node[hvertex] (v\i) at (\i * 360/\npoints:4cm){};
	}
	\foreach \i in {1,..., \nminusone}{
		\pgfmathtruncatemacro{\iplus}{\i + 1}
		\draw[hedge] (v\i) -- (v\iplus);
		\draw[hedge] (w\i) -- (w\iplus);
		\draw[hedge] (w\i) -- (v\i);
	}
	\draw[hedge] (v\npoints) -- (w\npoints);
	\draw[hedge] (v1) -- (w\npoints);
	\draw[hedge] (v1) -- (v\npoints);
	\draw[hedge] (w1) -- (w\npoints);
	\draw[hedge] (w1) -- (v\npoints);
\end{tikzpicture}
\end{center}
\caption{The type of graph of which $K$ is a minor.}\label{fig:cycle_ladder}
\end{figure}
Indeed, if $G[Y]$ is a strip without crossing chords, then the graph obtained 
from $K'$ by 
removing the edges between the vertices from $N(Y)$ is also such a strip. 
Moreover, the corners of this strip are the vertices from $N(Y)$. Now we find 
this strip as a minor in a large enough ladder, with the additional property 
that the branch sets of the vertices from $N(Y)$ each contain one of the 
end vertices of the ladder. Consequently, $K$ is a minor of the graph obtained 
from that ladder by adding all edges between the end vertices of that ladder.

If on the other hand $G(Y)$ is a path, then again we find the graph obtained 
from $K'$ by removing the edges between the vertices from $N(Y)$ as a minor in 
a large enough ladder, with the additional property that the branch sets of 
$N(Y)$ each contain an end vertex from that ladder.

But, for any $n\in \N$, the graph obtained from a ladder of length $n$ 
by making its set of end vertices complete does not contain an agile set of size 
$11$: by pigeonhole principle, one of the rails of the ladder (that is one of the 
sets $\{(i,0)\mid i\in [n]\}, \{(i,1)\mid i\in [n]\}$) needs to contain 
$6$ vertices from our agile set. But if we partition these vertices 
alternatingly along the rail, we see that is not possible to connect the two partition classes 
disjointly.

Since containing an agile set is a minor-closed property, $K$ can thus also not 
contain an agile set of size $11$ contradicting, by 
\cref{fact:agile_seps}, the assumption that $Y$ contains $11$ vertices 
from our agile set.
\end{proof}

\begin{COR}\label{lem:force_regular_strip}
 Let $G$ be a graph containing a strip $H$ as a subgraph, such that the 
corners of $H$ separate the rest of $H$ from the rest of $G$. If $G$ 
contains an agile set $X$ containing more than $j(k)$ many vertices 
from $H$, then $G$ contains a regular strip of length $k$ as a minor.
\end{COR}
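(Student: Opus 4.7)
The plan is to deduce the corollary from \cref{lem:agile_strips} by a direct application of \cref{fact:agile_seps}, exactly parallel to the derivation of \cref{lem:containing_fan} from \cref{fact:agile_seps} and \cref{fact:agile_fan}. Write $C$ for the set of corners of $H$. Since $C$ separates $V(H)\setminus C$ from $V(G)\setminus V(H)$ by hypothesis, the pair $\bigl(V(H),\,(V(G)\setminus V(H))\cup C\bigr)$ is a separation of $G$ whose separator is exactly $C$.

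First I would identify the torso on the $V(H)$-side of this separation: by definition it is the graph obtained from $G[V(H)]=H$ by making $C$ complete, which is precisely the object to which \cref{lem:agile_strips} is tailored. Applying \cref{fact:agile_seps}, the set $X\cap V(H)$ is agile in this torso, and by assumption $\abs{X\cap V(H)}>j(k)$.

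Feeding this into \cref{lem:agile_strips} then produces a regular strip of length $k$ as a minor of $H$ itself; crucially, the conclusion of \cref{lem:agile_strips} is stated for the strip $H$ rather than the torso, so the clique edges added on $C$ are not needed to realize the minor. Since $H$ is a subgraph of $G$, the same regular strip is a minor of $G$, which is what we want.

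There is essentially no obstacle here beyond the bookkeeping check that the torso of the two-bag decomposition really coincides with \enquote{$H$ with $C$ turned into a clique}, which is immediate from the definitions. The corollary functions purely as a packaging step, lifting the standalone strip statement of \cref{lem:agile_strips} to the situation of a strip embedded in a larger host graph, as needed for the proof of \cref{thm:agile_character}.
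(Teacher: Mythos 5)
Your proof is correct and follows the paper's argument exactly; the paper's own proof is the one-liner ``This is immediate from \cref{fact:agile_seps} and \cref{lem:agile_strips},'' and your write-up fills in precisely the separation-and-torso reasoning that the paper leaves implicit, including the key point that \cref{lem:agile_strips} yields the regular strip as a minor of $H$ itself rather than of the torso. One small imprecision: $G[V(H)]$ need not equal $H$ on the nose, since $G$ may contain extra edges among the corners of $H$, but those are absorbed by the clique the torso places on the separator $C$, so the identification of the torso with ``$H$ with $C$ made complete'' still goes through.
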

\begin{proof}
 This is immediate from \cref{fact:agile_seps} and \cref{lem:agile_strips}.
\end{proof}

We are now ready to use \cref{thm:Ding} to show that every internally 
$3$-connected graph containing a large agile set will indeed need to contain a 
large $K_{2,k}$ or a large regular strip as a minor:
\begin{LEM}\label{lem:internally_3}
  There exists a function $g\colon \N \to \N$ such that every internally 
3-connected graph which contains an agile set of size $g(k)$ contains $K_{2,k}$ 
or a regular strip of length $k$ as a minor.
\end{LEM}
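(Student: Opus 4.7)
The plan is to combine Ding's structural theorem with the previously established bounds on how many agile vertices a single fan or strip can absorb. Concretely, assume $G$ is internally $3$-connected and contains an agile set $X$ of some large size to be determined; we want to show that $G$ contains either $K_{2,k}$ or a regular strip of length $k$ as a minor. So suppose for contradiction that $G$ has neither. By \cref{thm:Ding}, the absence of a $K_{2,k}$-minor gives an integer $m = m(k)$ such that $G \in \cA_m$, i.e.\ $G$ is an augmentation of some graph $G_0$ on at most $m$ vertices.

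Next I would apply \cref{fact:bounded_strips} to write $G$ as $G_0$ together with at most $m/2$ added fans and strips $H_1,\dots,H_r$. The key structural point is that each vertex of $G$ lies either in $V(G_0)$ or is a non-corner (``internal'') vertex of exactly one of the $H_i$, because in an augmentation the only vertex identifications happen at corners, which are vertices of $G_0$. Writing $X_0 \coloneqq X \cap V(G_0)$ and $X_i \coloneqq X \cap (V(H_i) \setminus V(G_0))$ for $i=1,\dots,r$, this gives the disjoint partition
\[
    |X| = |X_0| + \sum_{i=1}^{r} |X_i|.
\]

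Now bound each piece. Trivially $|X_0| \le |V(G_0)| \le m$. For each $H_i$, the corners separate the rest of $H_i$ from the rest of $G$ (this is the very hypothesis under which \cref{lem:containing_fan} and \cref{lem:force_regular_strip} are stated). So if $H_i$ is a fan, \cref{lem:containing_fan} gives $|X \cap V(H_i)| \le 6$, and hence $|X_i| \le 6$. If $H_i$ is a strip, then since $G$ contains no regular strip of length $k$ as a minor, \cref{lem:force_regular_strip} forces $|X \cap V(H_i)| \le j(k)$, so $|X_i| \le j(k) = 22(k-1) + 4k + 1$. Putting everything together yields
\[
    |X| \le m + \frac{m}{2} \cdot \max\{6, j(k)\} = m + \frac{m}{2} j(k)
\]
(for $k \ge 1$ we have $j(k) \ge 6$). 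Therefore defining $g(k) \coloneqq m(k) + \tfrac{1}{2} m(k) \cdot j(k) + 1$ produces a contradiction whenever $|X| \ge g(k)$, proving the lemma.

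The proof is essentially a bookkeeping argument on top of the three previous structural inputs; there is no real obstacle, since the heavy lifting has been done in \cref{thm:Ding}, \cref{lem:containing_fan}, and \cref{lem:force_regular_strip}. The one thing to verify carefully is that the partition of $V(G)$ into $V(G_0)$ and the internal vertices of distinct fans/strips really is a partition — this relies on the definition of ``augmentation'', which only permits corner–corner identifications at vertices of $G_0$, so non-corner vertices of different added pieces are never merged and are disjoint from $V(G_0)$.
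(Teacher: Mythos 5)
The proposal is correct and takes essentially the same approach as the paper: apply Ding's theorem (\cref{thm:Ding}) to place $G$ in $\mathcal{A}_m$, use \cref{fact:bounded_strips} to bound the number of added fans and strips by $m/2$, and then invoke \cref{lem:containing_fan} and \cref{lem:force_regular_strip} to bound how many agile vertices each fan or strip can absorb. The paper phrases the final step as a pigeonhole (some added piece must contain at least $\max\{j(k),6\}$ vertices of $X$), whereas you sum the per-piece bounds and derive a contradiction; these are the same calculation, and your value of $g(k)$ matches the paper's up to an additive constant.
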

\begin{proof}
By \cref{thm:Ding}, we find some $m\in \N$ such that every graph without a 
$K_{2,k}$-minor is contained in $\cA_m$. Let 
$g(k)\coloneqq m+\frac{m}{2}\max\{j(k),6\}$. Now let $G$ be a graph containing an agile 
set of size $g(k)$. By 
\cref{thm:Ding} we know that $G$ either contains a $K_{2,k}$ as a minor or 
is contained in $\cA_m$. By 
\cref{fact:bounded_strips}, in the second case $G$ is 
obtained from a graph of at most $m$ vertices by augmenting at most 
$\frac{m}{2}$ fans and strips. By the pigeonhole principle, one of the 
augmented fans or strips needs to contain an agile set of size at least 
$\max\{j(k),6\}$. 
By \cref{lem:containing_fan}, this cannot be a fan, so it needs to be a strip. 
However, by \cref{lem:force_regular_strip}, this implies that $G$ contains a 
regular strip of length $k$ as a minor.
\end{proof}

In order to extend this result to graphs that are not locally $3$-connected, it 
will be essential to analyse how separations of order $2$ in a graph containing 
a large 
agile set can behave. As it turns out, we are able to assume that they are all 
pairwise nested:
\begin{LEM}\label{lem:seps_nested}
 Let $n\in\N$ and let $G$ be a graph which is minor-minimal with the 
property of containing an 
agile set $X$ of size $n$,
 then the separations of order $2$ in $G$ form a nested set.
\end{LEM}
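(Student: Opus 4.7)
The plan is a proof by contradiction. Suppose $(A,B)$ and $(C,D)$ are order-$2$ separations of $G$, with separators $\{a_1,a_2\}$ and $\{c_1,c_2\}$ respectively, that are not nested. Denote the four corners by $W_{\alpha\beta} = \alpha \cap \beta$ for $\alpha \in \{A, B\}$ and $\beta \in \{C, D\}$. By \cref{2-connected} applied to $G$, we may assume $G$ is $2$-connected, since otherwise the proof of that lemma furnishes a proper subgraph of $G$ with an agile set of size $n$, contradicting the minor-minimality of $G$.

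Because the two separations cross, each of the four corner-interiors $W_{\alpha\beta}\setminus\{a_1,a_2,c_1,c_2\}$ is non-empty. By submodularity of the separator function, the corner-separations $(A\cap C, B\cup D)$ and $(A\cup C, B\cap D)$ have orders summing to at most $4$, and $2$-connectedness of $G$ forces each to have order at least $2$; so both have order exactly $2$. Applying \cref{fact:agile_seps} to $(A\cap C, B\cup D)$ together with minor-minimality yields $|X\cap (A\cap C)|\le n-1$ and $|X\cap (B\cup D)|\le n-1$. Since every vertex of $X$ lies in $A\cap C$ or in $B\cup D$, if $X$ were disjoint from the interior of $W_{AC}$ then $X\subseteq B\cup D$, giving $|X\cap (B\cup D)| = n$, a contradiction. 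Hence $W_{AC}$ contains a vertex of $X$ in its interior; the analogous argument for each of the four corners shows that every corner does.

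The main step is then to exploit agility. Choose the partition $X = X_1 \cupdot X_2$ placing the $X$-vertices of the diagonal corners $W_{AC}$ and $W_{BD}$ in $X_1$, and those of $W_{AD}$ and $W_{BC}$ in $X_2$. By agility, there are disjoint trees $T_1 \supseteq X_1$ and $T_2 \supseteq X_2$; each must pass through both separators, since it reaches both an $A$-corner and a $B$-corner (forcing it through $\{a_1,a_2\}$) and both a $C$-corner and a $D$-corner (forcing it through $\{c_1,c_2\}$). Disjointness pins down $T_1$ to use $\{a_i,c_j\}$ and $T_2$ to use $\{a_{3-i}, c_{3-j}\}$ for some $i,j\in\{1,2\}$. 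Rerunning the argument with a `horizontal' partition $X_1' := X\cap (A\setminus B)$ versus $X_2' := X\setminus X_1'$ yields a second pair of disjoint trees with a different forced routing through the four separator vertices. Comparing the two routings inside a single corner, I will localize an edge or vertex whose deletion or contraction leaves $X$ agile in the resulting proper minor of $G$, contradicting minor-minimality.

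The main obstacle will be the combinatorial bookkeeping: the four separator vertices $a_1,a_2,c_1,c_2$ may coincide in various configurations and may themselves belong to $X$, so both the partitions above and the identification of the minor-reducing edge must be adapted to each case. The trickiest part is likely showing that the two routings conflict in a way that localizes a single deletable or contractible feature of $G$ while leaving enough tree-structure for $X$ to remain agile in the minor.
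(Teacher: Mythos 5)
Your first deduction — that the crossing of $(A,B)$ and $(C,D)$ forces all four corner-interiors $W_{\alpha\beta}\setminus\{a_1,a_2,c_1,c_2\}$ to be non-empty — is false, and the rest of the proposal is built on it. Two order-$2$ separations of a graph can cross while one corner consists entirely of separator vertices: in the $5$-cycle $1\,2\,3\,4\,5$, the separations $(\{1,2,3\},\{3,4,5,1\})$ and $(\{2,3,4\},\{4,5,1,2\})$ are not nested, yet the corner $\{1,2,3\}\cap\{2,3,4\}=\{2,3\}$ has empty interior. (Crossing only rules out a corner lying inside the \emph{intersection} of the two separators, not inside their union.) Because of this, your minor-minimality argument (contracting the interior of a corner to get a proper minor) cannot be run for an empty corner, and your conclusion that every quadrant contains a vertex of $X$ does not follow.

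This conclusion is in fact the opposite of what is true for a minor-minimal $G$. The paper's proof begins by noting that if all four quadrants met $X$, the partition placing the $X$-vertices of opposite quadrants into the same class could not be routed through the two order-$2$ separators, contradicting agility. Hence some quadrant is $X$-free, and then by minor-minimality it is vertex-free, leaving only a single edge $vw$; the bulk of the paper's proof is a careful case analysis showing that contracting $vw$ preserves the agility of $X$, which contradicts minimality. Your planned ``main step'' — extracting a contractible feature by comparing the routings coming from a diagonal and a horizontal partition — heads in a different direction and is also left unfinished, but the more fundamental problem is that the premise it is supposed to rest on (all four corners containing $X$-vertices) never holds in the minor-minimal case.
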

\begin{proof}
  Suppose that two separations $(A,B)$ and $(C,D)$ of order $2$ in $G$ cross.
  If $X$ meets all quadrants $ V \sm (B \cup D), V \sm (A \cup D), V \sm (B 
\cup C)$ and $V \sm (A \cup C)$,
  then this would contradict the agility of $X$ by partitioning vertices in 
opposite quadrants into the same partition class.

  Thus, at least one quadrant, say without loss of generality $V \sm (B \cup 
D)$, contains no vertex from $X$.
  Then, by minor-minimality of $G$, the quadrant contains no vertex, since 
contracting any edge adjacent to such a vertex results in a minor of $G$ in 
which $X$ is still 
agile. Thus, we may assume that $G[A \cap C]$ consists of just an edge between 
the sole vertices $v \in (A \cap B) \sm D$ and $w \in (C \cap D) \sm B$.

  We denote the second vertex in $A \cap B$ as $v'$ and the second vertex in 
$C \cap D$ as $w'$.

  We claim that we can contract the edge $vw$, contradicting the minimality 
of~$G$.

  Suppose first, that at most one of $v, w$ is contained in $X$, say $v \notin 
X$.
  Suppose that $X$ is not agile in $G'=G / vw$ and let us denote the 
partition of $X$ which witnesses this as $X = X_1 \cupdot X_2$. Since $X$ is 
agile 
in 
$G$, there are connected disjoint subgraphs $G_1, G_2$ of $G$ such that 
$X_1\subseteq G_1$ and $X_2\subseteq G_2$. Moreover, we may 
assume without loss of generality that $V(G_1)\cup V(G_2)=V$ and that $v \in 
G_1$ and 
$w \in G_2$, as otherwise $G'[V(G_1)]$ and $G'[V(G_2)]$ are also connected 
disjoint subgraphs of $G'$.

Now if $w' \in G_1$, then $B \cap C \supseteq G_2$. Then, for $V_1' = V(G_1) - 
v$ and $V_2' = V(G_2) + v$, we have that $G'[V_1']$ and $G'[V_2']$ are 
connected and 
contain $X_1$ and $X_2$, respectively. Thus, $X_1\cupdot X_2$ does not witness 
that 
$X$ is not agile in $G'$.

So suppose $w' \in G_2$. Then $v' \in G_2$, since $G_2$ is connected and 
$\{v,v'\}$ separates $w$ from $w'$.
By a symmetric argument to the above we may assume that $w \in X$, as otherwise 
$G'[V(G_1)+w]$ and $G'[V(G_2)-w]$ are connected and contain $X_1$ and $X_2$, 
respectively.
Thus, we have that $X_1 \subseteq V \sm (D \cup A)$.

If $X\cap B\cap C\subseteq X_1$, the partition $X=X_1\cupdot X_2$ would again 
not 
witness that $X$ is not agile in $G'$, as both $G'[B\cap C]$ and $G'[V\sm 
(B\cap C)]$ are connected. So we may suppose that $X_2\cap (B\cap C)$ is 
non-empty.

Moreover, there do not exist connected subgraphs $G_1''$ and $G_2''$ of 
$G[B\cap C]$ such that $v\notin G_1''$ and such that $X_1\subseteq G_1''$ and 
$X_2\cap (B\cap C)\subseteq G_2''$, as we could otherwise replace $G_1\cap
B\cap C$ and $G_2\cap B\cap C$ with these subgraphs which then shows that 
$X_1\cupdot X_2$ is not a partition witnessing that $X$ is not agile in $G'$.

 Additionally, $X$ is not completely contained in $C$, as $G$ was chosen 
$\subseteq$-minimal and $X\cap C$ is agile in the torso obtained from $G[C]$ 
by  \cref{fact:agile_seps}, and this torso is a proper minor of $G$ since there 
exists the vertex $v'\in (A\cap B)\sm C$. 

Thus, let $x \in X \sm C$ and let $X_1' = X_1 + x$ and $X_2' = X_2 - x$. Since 
$X$ is agile in $G$, there are disjoint connected subgraphs $T_1'$ and $T_2'$ 
of $G$ with $X_1'\subseteq T_1'$ and $X_2'\subseteq T_2'$.
Now $w' \in T_1'$, since $\{w, w'\}$ separates $X_1'$ in $G$ and $w \in 
X_2$.
  On the other hand $v \in T_1'$ since $X_1', X_2'$ look the same on $G[B \cap 
C]$ as $X_1, X_2$.
  But then $T_2$ cannot connect $w$ to $X_2 \cap (B \cap C)$, which is a 
contradiction.

  It thus remains the case that $v, w \in X$. Then $v', w' \notin X$, as 
$\{v,v'\}$ separates $w$ from $V\sm A$ and $\{w,w'\}$ separates $v$ from $V\sm 
C$ and both, $V\sm A$ and $V\sm C$ need to contain vertices from $X$ by the 
minor minimality of $G$.

We may assume that $X$ meets both $V \sm (A \cup D)$ 
and $V \sm (C \cup B)$ in vertices $x$ and $y$, say, as otherwise the 
corresponding quadrant would contain only an edge between $v$ and $w'$, 
respectively $w$ and $v'$, which could be contracted by the previous argument.

Now consider a partition $X_1'\cupdot X_2'$ of $X$ where $w, x \in X_1'$ and 
$v, y 
\in 
X_2'$. This partition witnesses that $X$ is not agile, which is a
contradiction.
\end{proof}

Thus, the set of all separations of order $2$ of such a minor-minimal $G$ which 
are neither small nor 
co-small form a tree-set, which in turn gives us a tree-decomposition 
$(T,\cV)$ of $G$ along all these separations. In particular all torsos of 
this tree-decomposition are $3$-connected. If a large subset of our agile set 
is contained in one of the 
torsos of this decomposition, we know by \cref{fact:agile_seps}, that it is 
still agile in the torso, and since the torso is $3$-connected, we can then 
apply \cref{lem:internally_3} to deduce that the torso, and thus also the 
original graph, contains a large $K_{2,k}$ or a large regular strip as a minor.

However, this does not need to be the case. But we can use the structure given 
by $T$ to analyse the case where it fails. Namely, if no torso contains a large 
agile set, then our agile set needs to be spread across a lot of different 
bags of the decomposition. This can either be the case in a `path-like' or 
in a `star-like' way. Let us first deal with the `path-like' case:
\begin{LEM}\label{lem:sep_sequence}
There exists a function $h\colon \N\to\N $ such that the following holds:
if $G$ is a graph containing an agile set $X$ and a sequence \[{(A_1, B_1) 
\leq \dots \leq (A_{h(k)}, B_{h(k)})}\] of separations of order $2$,
  so that $ (B_i \setminus B_{i+1}) \cap X \neq \emptyset $ for all $ 1 \leq 
i < h(k)$, then $G$ contains a $ K_{2,k} $ or a regular strip of length $k $ 
as 
a minor.
\end{LEM}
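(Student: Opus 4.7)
The strategy is to extract a ladder-like skeleton from the chain of order-$2$ separators and then use the agility of $X$ to enrich it into either a regular strip of length $k$ or a $K_{2,k}$ minor. Throughout, I write $S_i := A_i \cap B_i = \{u_i, v_i\}$ for the $i$th separator, pick an agile-set vertex $x_i \in (B_i \setminus B_{i+1}) \cap X$ for each $1 \leq i < h(k)$, and set $P_i := G[A_{i+1} \cap B_i]$ for the piece whose boundary sits in $S_i \cup S_{i+1}$ and whose interior contains $x_i$. The function $h(k)$ will be chosen large enough to absorb a Ramsey-type reduction, which turns out to be polynomial in $k$.

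The first step applies agility to partitions of the form $X_1 \supseteq \{x_{i-1}, x_{i+1}\}$ and $X_2 \ni x_i$, distributing the remaining members of $X$ freely. The two disjoint trees $T_1, T_2$ this produces, once intersected with $P_i$, give two internally disjoint $S_i$--$S_{i+1}$-subgraphs of $P_i$: a $T_1$-path from $x_{i-1}$ through $P_i$ to $x_{i+1}$, and a $T_2$-subgraph containing $x_i$ and meeting both separators (either inside $P_i$ directly, or via the part of $T_2$ that exits $P_i$ — in which case contraction of that external arc already supplies the missing piece). So each $P_i$ admits two vertex-disjoint paths between $S_i$ and $S_{i+1}$, one of them through $x_i$.

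Each piece now has a \emph{signature}: either the two paths link $u_i$ to $u_{i+1}$ and $v_i$ to $v_{i+1}$ (\emph{parallel}), or $u_i$ to $v_{i+1}$ and $v_i$ to $u_{i+1}$ (\emph{crossing}). Pigeonhole among $h(k)$ pieces yields a subchain of $k+1$ consecutive pieces with a common signature; after relabelling the $u$- and $v$-tracks along the chain we may assume the signature is parallel throughout. Contracting the two rail-paths in each piece of this subchain produces the rails $u_1 u_2 \cdots u_k$ and $v_1 v_2 \cdots v_k$, and the path through $x_i$, combined with a second application of agility to partitions isolating $\{x_i, x_{i+1}\}$ against their neighbours, supplies the crossing chords $u_i v_{i+1}$ and $v_i u_{i+1}$. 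The result is a regular strip of length $k$ as a minor. If instead many pieces support \emph{both} a parallel and a crossing linkage, each such piece yields a $K_4$ on its boundary, and $k$ of them in series produce a $K_{2,k}$ minor using two fixed boundary vertices as the two degree-$k$ vertices.

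The main obstacle is forcing the \emph{crossing} chord pattern required by a regular strip rather than settling for a long ladder: the routing through $x_i$ must actually connect the rails across adjacent pieces, and this demands a more delicate agility argument than the initial two-path extraction. A secondary technical issue is that consecutive separators $S_i, S_{i+1}$ may share a vertex (so the rail labelling need not be well-defined on the nose); this is handled by first collapsing any maximal overlap chain and losing only a constant factor in chain length. Pieces where only the $x_i$-path exists without a bypass are handled by widening the agility-partition window to $\{x_{i-2}, x_{i-1}, x_{i+1}, x_{i+2}\}$ versus $\{x_i\}$, forcing the missing bypass into a neighbouring piece. With these reductions in place, $h(k)$ can be taken polynomial in $k$.
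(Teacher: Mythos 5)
Your opening mirrors the paper's: pick one agile vertex $x_i$ per link of the chain, apply agility to the alternating partition, and extract in each piece $P_i = G[A_{i+1}\cap B_i]$ two vertex-disjoint $S_i$--$S_{i+1}$ paths (one through $x_i$), then classify pieces by the parallel/crossing pattern. That setup is sound, and it is essentially the paper's first step. The case analysis that follows, however, misfires on both branches. After relabelling to a parallel signature you assert that a second agility application (isolating $\{x_i,x_{i+1}\}$) supplies the crossing chords $u_iv_{i+1}$ and $v_iu_{i+1}$, hence a regular strip. This is unjustified: a piece may admit \emph{only} the parallel linkage — think of a ladder-like piece — and no repartition of $X$ can manufacture a crossing chord inside $P_i$ where none exists. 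What switching $x_i$'s class actually yields is a second pair of disjoint trees with a related routing pattern, and whether a cross emerges depends on whether certain paths from the two pairs intersect. This is exactly the paper's distinction between \emph{weakly free} pairs (which do give a cross, via \cref{lem:cros_wfree}) and the remaining restrictive pairs (which do not). Your proposal collapses this dichotomy, so the crossing chords it promises need not exist.

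Conversely, the case that must produce the $K_{2,k}$ — long stretches of restrictive pairs where the two auxiliary paths stay disjoint — is missing from your argument entirely. Your only route to $K_{2,k}$ is "pieces with both linkages give $K_4$'s, and $k$ of them in series produce a $K_{2,k}$ with two fixed boundary vertices as the degree-$k$ vertices." But a chain of boundary-$K_4$'s glued along shared separator pairs is precisely the raw material for a regular strip, not a $K_{2,k}$: as \cref{cex:noK25} shows, regular strips of moderate length already fail to contain $K_{2,5}$, so this conclusion is false for $k\geq 5$, and "two fixed boundary vertices" cannot reach $k$ branch sets spread along the chain anyway. In the paper the $K_{2,k}$ arises from the \emph{opposite} regime: for $k$ restrictive, non-weakly-free pairs, the two paths $T_{2,i}$ and $T_{1,i}'$ are disjoint, their concatenations along the chain form the two degree-$k$ branch sets, and each $x_i$ — routed to both rails through $T_{1,i}\cup T_{2,i}'$ — supplies a degree-$2$ branch set. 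Without some version of this mechanism, your proof establishes neither outcome of the lemma when pieces carry only the parallel linkage.
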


In order to \cref{lem:sep_sequence}, we will need to analyse how the parts of $G$ in-between these separations of order 2 look like.
We will want to find, as a substructure of these parts, a specific arrangement of paths which we call a `cross'.
Formally, a \emph{cross between} $A=\{a_1, a_2\}$ and $B=\{b_1, b_2\}$ consists of two disjoint $A$--$B$-paths $P_1$ and $P_2$ and two disjoint $P_1$--$P_2$-paths $Q_1$ and $Q_2$ so that the end vertex of $Q_1$ appears along $P_1$ before the end vertex of $Q_2$ and the end vertex of $Q_2$ appears along $P_2$ before the end vertex of $Q_1$.

The following two lemmas give two very specific sets of conditions under which we can find such a cross:

\begin{LEM}\label{lem:cross_free}
    Let $G$ be a graph and let $a_1, a_2, b_1, b_2$ be vertices of $G$.
    Let $P_1, P_2$ be disjoint paths, where $P_1$ is from $a_1$ to $b_1$ and $P_2$ is from $a_2$ to $b_2$,
    and let $Q_1, Q_2$ be disjoint paths, where $Q_1$ is from $a_1$ to $b_2$ and $Q_2$ is from $a_2$ to $b_1$.
    Moreover, let $G$ be edge-minimal with the property, that it contains such paths.
    Then $G$ contains a cross between $\{a_1, a_2\}$ and $\{b_1, b_2\}$.
\end{LEM}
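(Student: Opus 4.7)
The plan is to exploit edge-minimality to conclude that $G$ is exactly the edge-union of the four given paths, that is, $E(G) = E(P_1) \cup E(P_2) \cup E(Q_1) \cup E(Q_2)$: any edge outside all four paths could be deleted from $G$ without destroying either of the two linkages, contradicting edge-minimality.

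Next, since $P_1$ and $P_2$ are vertex-disjoint and each of $Q_1, Q_2$ joins $V(P_1)$ to $V(P_2)$, every $Q_i$ contains at least one \emph{crossing jump}: a subpath with one endpoint on $P_1$, one on $P_2$, and interior disjoint from $V(P_1) \cup V(P_2)$. Choose such crossing jumps $R_1 \subseteq Q_1$ and $R_2 \subseteq Q_2$; because $Q_1, Q_2$ are vertex-disjoint, so are $R_1, R_2$, and thus they form two vertex-disjoint $P_1$-$P_2$ paths in $G$.

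The crux is to arrange the choice so that $R_1$ and $R_2$ cross on $P_1 \cup P_2$, meaning that on $P_1$ the endpoints of $R_1$ and $R_2$ appear in one order while on $P_2$ they appear in the reverse order. I would argue this by contradiction: assume that for every pair of crossing jumps the endpoints are in parallel rather than crossing configuration. Under this assumption one can replace $Q_1$ and $Q_2$ by rerouted paths using only $R_1, R_2$ and suitable prefix/suffix segments of $P_1, P_2$, and argue that some edge of $Q_1 \cup Q_2$ becomes unused by either linkage, contradicting edge-minimality.

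The main obstacle is this last step. In a parallel configuration the naive rerouting of $Q_1$ through $R_1$ and $Q_2$ through $R_2$ can create vertex overlaps along $P_1$ or along $P_2$, so the jumps must be chosen carefully. I expect one will need to pick $R_1, R_2$ extremally in some sense --- for instance by minimising the positions of their endpoints on $P_1$ (respectively $P_2$) among all candidate pairs --- and to handle the possibility that $Q_1, Q_2$ traverse $P_1 \cup P_2$ many times; this combinatorial bookkeeping will form the bulk of the proof.
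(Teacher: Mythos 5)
Your proposal sets up the right framework but stops precisely at the step that carries all the content, and the ``extremal choice over all candidate pairs'' you anticipate is not how the gap is closed. The paper picks one canonical jump in each $Q_i$: let $v$ be the last vertex of $P_1$ such that $a_1 P_1 v$ is a subpath of $Q_1$ (so $Q_1$ agrees with $P_1$ from $a_1$ up to $v$), and let $Q_1'$ be the segment of $Q_1$ from $v$ to the first vertex $w$ of $vQ_1 - v$ lying on $P_1 \cup P_2$. Edge-minimality is needed only to show that $w$ lies on $P_2$ rather than back on $P_1$: if $w\in P_1$, rerouting $P_1$ through $Q_1'$ in place of $vP_1w$ leaves the first edge of $vP_1w$ unused by all four paths, contradicting minimality. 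Define $Q_2'$ symmetrically, with endpoints $v'\in P_2$ and $w'\in P_1$. The crossing condition then comes for free rather than by a contradiction argument: $a_1P_1v=a_1Q_1v$ is a vertex set of $Q_1$, and $Q_1,Q_2$ are vertex-disjoint, so $w'\notin a_1P_1v$, i.e.\ $w'$ lies strictly after $v$ on $P_1$; symmetrically $w$ lies strictly after $v'$ on $P_2$. That is exactly the required crossing pattern.

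What your sketch leaves open is precisely this crossing step, and the fallback you propose --- assume all pairs of jumps are parallel, reroute $Q_1,Q_2$ through a chosen pair, and argue some edge becomes unused --- is not carried out and has real obstacles: rerouted paths may collide along $P_1$ or $P_2$, and since $Q_1$ may share edges with $P_1$ on the prefix $a_1P_1v$ it is not obvious which edge is freed. You acknowledge this yourself (``this combinatorial bookkeeping will form the bulk of the proof''). The point of the actual argument is that by taking the \emph{first} departure of $Q_1$ from $P_1$ and of $Q_2$ from $P_2$, no such bookkeeping is needed: edge-minimality contributes only the single local fact that each first jump lands on the opposite path, and vertex-disjointness of $Q_1$ and $Q_2$ then forces those two jumps to cross.
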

\begin{proof}
    Let $v$ be the last vertex along $P_1$ for which $a_1P_1v$ is a subpath of $Q_1$, let $w$ the first vertex of $vQ_1 - v$ which meets $P_1 \cup P_2$, and let $Q_1'$ be the segment $vQ_1w$.
    By the edge-minimality of $G$, the vertex $w$ lies on $P_2$: if $w$ would lie on $P_1$, then we could delete the edges of the segment $vP_1w$ from $G$ and replace this segment of $P_1$ by $Q_1'$, contradicting the edge-minimality.
    Symmetrically, we let $Q_2'$ be the segment of $Q_2$ from the first point where it leaves $P_2$ to the first point where it meets $P_1 \cup P_2$ again and observe that it does so in $P_1$.

    The paths $P_1, P_2, Q_1'$, and  $Q_2'$ form a cross between $\{a_1, a_2\}$ and $\{b_1, b_2\}$.
\end{proof}

\begin{LEM}\label{lem:cros_wfree}
    Let $G$ be a graph and let $a_1, a_2, b_1, b_2, x$ be vertices of $G$.
    Let $S_1, S_2$ be disjoint trees, where $S_1$ is a path from $a_1$ to $b_1$ and $S_2$ has leaves precisely $a_2$, $b_2$, and $x$,
    and let $T_1, T_2$ be disjoint trees, where $T_2$ is a path from $a_2$ to $b_2$ and $T_1$ has leaves precisely $a_1$, $b_1$, and $x$.
    Moreover, let $G$ be edge-minimal with the property, that it contains such trees.
    Then $G$ contains a cross between $\{a_1, a_2\}$ and $\{b_1, b_2\}$.
\end{LEM}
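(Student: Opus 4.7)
The plan is to adapt the edge-minimality argument from the proof of \cref{lem:cross_free} to the tripod structure of $S_2$ and $T_1$ around the shared leaf $x$.

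First I would unpack the structure of the trees. Since $S_2$ has leaves precisely $a_2, b_2, x$ it is a subdivision of $K_{1,3}$: it has a unique branching vertex $y$ and three internally disjoint arms $S_2^{a_2}, S_2^{b_2}, S_2^{x}$ from $y$ to each leaf. Symmetrically, $T_1$ is a subdivision of $K_{1,3}$ with branching vertex $z$ and arms $T_1^{a_1}, T_1^{b_1}, T_1^{x}$. The concatenation $T_1^{x}\cup S_2^{x}$ is a $z$--$y$-path through $x$, which I shall call the \emph{$x$-bridge}.

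For the $A$--$B$-linkage of the cross I take
\[
  P_1 := S_1 \qquad\text{and}\qquad P_2 := S_2^{a_2} \cup S_2^{b_2},
\]
which are vertex-disjoint because $S_1 \cap S_2 = \emptyset$. As the first crossing path $Q_1$ I would use the $x$-bridge extended by one arm of $T_1$: for an appropriate choice of arm (say $T_1^{a_1}$), the concatenation $T_1^{a_1} \cup T_1^{x} \cup S_2^{x}$ yields a path from $a_1 \in P_1$ to $y \in P_2$. As the second crossing path $Q_2$ I would use a subpath of $T_2$: since $T_2$ runs between the two endpoints of $P_2$ but is vertex-disjoint from $T_1$, I aim to argue via edge-minimality that $T_2$ must meet $S_1 = P_1$ at some interior vertex $w$, so that a subpath of $T_2$ from $a_2$ (or $b_2$) to the first such $w$ is a $P_2$--$P_1$-path.

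The main obstacle is the edge-minimality argument producing the intersection $T_2 \cap S_1 \neq \emptyset$, together with the analogous arguments needed to control possible intersections of $S_2^{x}$ with $T_2$ so that $Q_1, Q_2$ end up genuinely vertex-disjoint. These should follow the template of \cref{lem:cross_free}: one picks the last vertex along a relevant path that still lies on another subgraph, traces forward, and concludes that if the desired intersection were absent then a suitable rerouting of one of the trees would show that some edge of $G$ is superfluous, contradicting edge-minimality. Once the intersections are in place, the interleaving condition is automatic: the endpoint of $Q_1$ on $P_2$ is the interior vertex $y$, while the endpoint of $Q_2$ on $P_2$ is an endpoint of $P_2$, and symmetrically on $P_1$; by an appropriate choice of which arm of $T_1$ to extend and which end of $T_2$ to follow, we can then ensure that the endpoints of $Q_1$ and $Q_2$ interleave on both $P_1$ and $P_2$, giving the cross.
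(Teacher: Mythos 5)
Your high-level plan differs from the paper's: you take $P_1 := S_1$ and $P_2 := S_2^{a_2}\cup S_2^{b_2}$ and try to manufacture the crossing paths from $T_1\cup S_2^x$ and $T_2$, whereas the paper takes $P_1 := a_1T_1b_1$ and $P_2 := T_2$ (both subgraphs of the disjoint pair $T_1,T_2$, so vertex-disjoint for free) and then extracts the two crossing paths from $S_1$ and $S_2$ via the same ``last shared vertex / first return'' tracking used in \cref{lem:cross_free}. Your dual choice is not obviously doomed, but as written it has two concrete gaps.

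First, your $Q_1$ is not controlled. You only know $S_1\cap S_2=\emptyset$ and $T_1\cap T_2=\emptyset$; the trees $T_1$ and $S_1$ can share many vertices besides $a_1,b_1$, and $T_1$ and $S_2$ can share many vertices besides $x$. So $T_1^{a_1}\cup T_1^{x}\cup S_2^{x}$ need not be a simple path at all, and even where it is, it can re-enter $P_1=S_1$ or $P_2=S_2^{a_2}\cup S_2^{b_2}$ in its interior. This is precisely what the ``last vertex $v$ with $a_1T_1v$ a subpath of $S_1$ / first vertex $w$ meeting $T_1\cup T_2$'' bookkeeping in the paper is there to control; you gesture at following that template but the choices you have locked in (starting at $a_1$, routing through the full arm to $x$ and then through $S_2^x$ to the branch vertex $y$) do not come out of such an analysis, and it is not clear that any tidy cleanup of this particular walk works. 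The paper instead \emph{derives} its two crossing segments as the outcome of that tracking and of rerouting arguments, rather than writing down a concatenation of whole arms up front.

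Second, and more seriously, the claim that edge-minimality forces $T_2$ to meet $S_1$ is simply false. Edge-minimality only says that every edge of $G$ lies on at least one of $S_1,S_2,T_1,T_2$; it does not force $S_1$ and $T_2$ to intersect. In fact one can build an edge-minimal $G$ where $S_1$ and $T_2$ are disjoint, namely two vertex-disjoint ``tripods'' $T_1$ and $S_2$ sharing only the five prescribed endpoints, with $S_1=a_1T_1b_1$ and $T_2=a_2S_2b_2$, and then there is only a single $P_1$--$P_2$ path in the whole graph and no cross at all. So your $Q_2$ need not exist. (This also means that the hypothesis $S_1\cap T_2\neq\emptyset$, which is exactly what the ``weakly free'' condition supplies at the point the paper applies this lemma, really is needed and cannot be recovered from edge-minimality; you should add it as an assumption rather than try to prove it.)

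So the proposal is not a complete proof: both the construction of $Q_1$ and the existence of $Q_2$ need substantially more work, and the latter needs an extra hypothesis that you currently claim, incorrectly, to derive from edge-minimality.
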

\begin{proof}
    The trees $S_2$ and $T_1$ each have precisely one vertex of degree 3 which we call $s$ and $t$.
    If $a_1T_1t$ is not a subpath of $S_1$, then let $v$ be the last vertex along $a_1T_1t$ for which $a_1T_1v$ is a subpath of $S_1$ and let $w$ be the first vertex of $vS_1 - v$ which meets $T_1 \cap T_2$.
    We observe that, by the edge-minimality of $G$, the vertex $w$ lies on $T_2$.
    Let us denote the path $vT_2w$ as $P$.
    Consider the path $vS_2x$ and let $Q$ be the shortest subpath of $vS_2x$ which starts in $v$ and ends in $T_1$. (Such a path exists since $x\in T_1$.)

    By the edge-minimality of $G$, in $S_2$ there exists a $(a_2T_2w)$--$T_1$-path which we call $Q'$.
    If the end-vertex of $Q'$ is in $tT_1x$, then we extend $Q'$ in $T_1$ to $x$ to obtain $Q$,
    otherwise we let $Q = Q'$.

    Then $a_1T_1b_1$, $T_2$, $P$, and $Q$ form a cross between $\{a_1, a_2\}$ and $\{b_1, b_2\}$.

    If $a_1T_1t$ \emph{is} a subpath of $S_1$, then $tT_1b_1$ is not a subpath of $S_1$ and exchanging the sets $\{a_1, a_2\}$ and $\{b_1, b_2\}$ and proceeding as above gives the desired cross.
\end{proof}

\begin{proof}[Proof of \cref{lem:sep_sequence}]
 Let $x_i\in B_i\sm B_{i+1}\cap X$ and note that the $x_i$ are pairwise 
distinct. Let $A_i\cap B_i=\{s_i^1,s_i^2\}$.
 
 If we consider the partition of $X$ given by the two classes 
$X_1=\{x_1,x_3,\dots\}$ and $X_2=\{x_2,x_4,\dots\}$, and corresponding disjoint 
trees $T_1$ and $T_2$ containing $X_1$ and 
$X_2$, respectively, we observe that there need to be, for 
every $2<i<n-3$, two disjoint paths from $A_i\cap B_i$ to $A_{i+1}\cap B_{i+1}$ 
in $B_i\cap A_{i+1}$, one contained in $T_1$ and the other contained in $T_2$. 
 
 We say that the pair $\{i,(i+1)\}$ is \emph{free} if there are two pairs 
of such paths $T_1,T_2$, where one consists of a path between $s_i^1$ and 
$s_{i+1}^1$ and a path 
between $s_i^2$ and 
$s_{i+1}^2$, and the other consists of a path between $s_i^1$ and 
$s_{i+1}^2$ and a path between $s_i^2$ and $s_{i+1}^1$. Otherwise, the pair 
$i(i+1)$ is said to 
be \emph{restrictive}.
 
 We note that, if there are, for $2<i<n-2$, two consecutive pairs $\{(i-1),i\}$ 
and $\{i,(i+1)\}$ which both are restrictive, then for one of the pairs, say 
$\{i,(i+1)\}$, 
there need to be two pairs $T_{1,i},T_{2,i}$ and $T_{1,i}',T_{2,i}'$ of 
disjoint trees in 
$B_i\cap A_{i+1}$, such that $T_{1,i}$ contains $s_i^1,x_i,s_{i+1}^1$ and
$T_{2,i}$ 
contains $s_i^2,s_{i+1}^2$ and $T_{1,i}'$ contains $s_i^1,s_{i+1}^1$ and 
$T_{2,i}'$ 
contains $s_i^2,x_i,s_{i+1}^2$. This is due to the fact that $X$ is agile, and 
we can thus consider the partition of $X$ obtained from the one above by 
changing only the class to which $x_i$ belongs. 
Moreover, we may suppose without loss of generality that $T_{2,i}$ and 
$T_{1,i}'$ are paths.
 
 If there are, for the pair $\{i,(i+1)\}$, these four trees 
$T_{1,i},T_{2,i},T_{1,i}',T_{2,i}'$ as above with the additional property that 
the two paths 
$T_{2,i}$ and $T_{1,i}'$ meet, then we say that the restrictive pair 
$\{i,(i+1)\}$ is \emph{weakly free}.
 
We may suppose that $h(k)$ is chosen such 
that there either is a large interval $l<i<m$ with the property that every 
pair $\{i,(i+1)\}$ from that interval is restrictive and not weakly free, or 
that there is a large collection of pairs which are all free or weakly free.
 
 In the former case, by the definition of weakly free and the above 
observation about two adjacent restrictive pairs, we may suppose that $m$ 
(and thus $h(k)$) is chosen such that there are at least $k$ pairs
$\{i,(i+1)\}$ from the interval $l<i<m$, for which we find trees 
$T_{1,i},T_{2,i},T_{1,i}',T_{2,i}'$ as above with the 
additional property that $T_{2,i}$ and $T_{1,i}'$ are disjoint. In that case we 
find a $K_{2,k}$-minor in $G$ as follows: 
the branch sets for the two vertices of degree $k$ each consist of a path 
between $A_l\cap B_l$ and $A_m\cap B_m$ formed by concatenating the $T_{2,i}$'s 
and $T_{1,i}'$'s, respectively. Now we find a path between $x_i$ 
and $T_{2,i}$ and a path between $x_i$ and $T_{1,i}'$ both contained in 
$T_{1,i}\cup T_{2,i}'$ and thus both contained in $B_i\cap A_{i+1}$. The 
union of these two paths form, for each of the $k$ restrictive pairs 
considered, the branch set of a vertex of degree $2$ in our $K_{2,k}$-minor.
 
 So we may suppose that there is a collection of at least $k$ pairs 
$\{i,(i+1)\}$ 
which are all free or weakly free.

 In this case, \cref{lem:cros_wfree,lem:cross_free} ensure that whenever a pair $\{i,(i+1)\}$ is free or 
weakly free, there exists a cross between $\{s_i^1,s_i^2\}$ and $\{s_{i+1}^1,s_{i+1}^2\}$ in $B_i\cap A_{i+1}$ .
Combining all these crosses we obtain a regular strip of length $k$ as a minor in~$G$.
\end{proof}
We are now ready to prove \cref{thm:agile_character}:
\agilecharacter*
\begin{proof}[Proof of \cref{thm:agile_character}]
By \cref{2-connected} we may suppose that $G$ is $2$-connected. Moreover, we 
may 
suppose that $G$ is minor-minimal with the property that $G$ contains an agile 
set $X$ of size $f(k)$.

 By \cref{lem:seps_nested}, the regular separations of order $2$ in $G$ form a 
nested set, thus by \cite[Theorem 4.8]{confing}
there is a tree-decomposition of $G$ which induces all these separations. We consider the parts of this decomposition containing vertices of our 
agile set. If there is one node of $T$ whose corresponding part contains at 
least $g(k)$ many vertices of our agile set, then, since the torso 
of this part is $3$ connected, this torso and thus $G$ contains a $K_{2,k}$ or a 
regular strip of length $k$ as a minor by 
\cref{lem:internally_3}.
 
So each part of this decomposition contains less than $g(k)$ many vertices of 
our agile set. Thus, by the pigeonhole principle, we can choose $f(k)$ such 
that 
we either find a sequence of separations of $G$ as in \cref{lem:sep_sequence}, 
or that there is a node $t$ if $T$ such that there are $n\gg k$ many different 
components of $T-t$ which each contain a vertex $t'$ such that the part of the 
tree-decomposition corresponding to $t'$ contains a vertex of our agile set 
which is not contained in the part corresponding to $t$.

In the first case, we are immediately done by \cref{lem:sep_sequence}, so 
suppose that there indeed is a node $t$ of $T$ 
such that there are $n\gg k$ different components of $T-t$ which each contain a 
vertex $t'$ such that the part of the tree-decomposition corresponding to $t'$ 
contains a vertex of our agile set which is not contained in the part 
corresponding to $t$.
 
 Let us denote the separations corresponding to the incoming edges from these 
components to $t$ as $(A_1,B_1),\dots, (A_n,B_n)$ and note that they form a 
star.
 
 We now ask whether $A_i\sm B_i$ contains at least two vertices of $X$, or if 
this set contains only one such vertex. If at least $k$ of the sets $A_i\sm 
B_i$ contain at least two vertices from our agile set, we consider a partition 
$X_1\cupdot X_2$ of 
$X$ where, for each of these $A_i\sm B_i$, we add one vertex from $X\cap A_i\sm 
B_i$ to $X_1$ and all others to $X_2$. This results in two trees $T_1$ and 
$T_2$, where each of the separators 
$A_i\cap B_i$ needs to contain one vertex from $T_1$ and one vertex from $T_2$. 
Moreover, $T_1$ and $T_2$ still need to be connected after deleting all the 
sets 
$A_i\sm B_i$ which contain two vertices from $X$. These two connected sets 
form the two vertices of high degree of a $K_{2,k}$. The vertices of degree $2$ 
can then by obtained from the components of the sets $A_i\sm B_i$, since each 
such component needs, as $G$ is $2$-connected, to send an edge to both vertices 
in $A_i\cap B_i$ and thus sends an edge to both $T_1$ and $T_2$.
 
 So suppose that at least $g(k)$ of the sets $A_i\sm B_i$ only contain 
one vertex 
from $X$. Then we 
know, since $G$ was chosen minor-minimal, that $A_i\sm B_i$ consists for each 
such $i$ of just 
this one vertex from $X$ and that this vertex is, as $G$ is $2$-connected, 
adjacent to both vertices in $A_i\cap B_i$. 
 Note that no two separations $(A_i, B_i)$, $(A_j, B_j)$ corresponding to incoming edges of $t$ can have 
the same separator $A_i \cap B_i = A_j \cap B_j$, since then the supremum $(A_i \cup A_j, B_i \cap B_j)$ of these two separations would also 
lie in our nested set, and would imply that $t$ has only degree $3$ in $T$. 
 
 Let us show that, if we contract, for every incoming edge to $t$ which 
corresponds to a separation $(A,B)$, one component of the set $A\sm B$ 
down to a single vertex, and delete all other components of $A\sm B$, we are 
left with an internally $3$-connected graph.
 For this we only need to show that $G$ does not contain any edge $e$ in $C\cap 
D$, as the torso corresponding to $t$ is $3$-connected. 
 So suppose $G$ does contain an edge $e\in C\cap D$. 
 Then, since $G$ was chosen minor-minimal, deleting this edge results in $X$ 
no longer being agile, say because of the partition $X_1\cupdot X_2$, which was 
independent in $G$ as witnessed by $T_1$ and $T_2$, and suppose that $T_1$ 
contains $e$.
Then $T_2$ would need to be contained entirely in $A\sm B$, as otherwise $T_2$ 
is 
disjoint from $A\sm B$, and thus replacing $e$ with a path between the two 
vertices in $A\cap B$ contained in $A$ results in 
$(X_1,X_2)$ 
being independent in $G-e$. Thus, since $T_2$ is contained in $A\sm B$, we may 
assume that $T_2$ consists of just one vertex $x$ from $X$, and $A\sm B=\{x\}$. 
But now, since $(X_1,X_2)$ is not independent in $G-e$, this implies that $x$ 
is a separator in $G-e$. But the only neighbours of $x$ are the vertices in 
$A\cap B$, and thus, since $G$ contains more than $3$ vertices, one of the two 
vertices in $A\cap B$ would be a separator of $G$, contradicting 
\cref{2-connected}.
 
Thus, if we contract for every incoming edge to $t$ with separation $(A,B)$ one 
component of the set $A\sm B$ down to a single vertex, and delete all other 
components of $A\sm B$, we are left with an internally $3$-connected graph. 
This graph still needs to contain an agile set of size $g(k)$, since the set 
of all the vertices from $X$ which are the unique vertex from $X$ 
in one of the sets $A_i\sm B_i$ is agile in this restricted graph. Thus, by 
\cref{lem:internally_3}, we again find 
a $K_{2,k}$ or a regular strip of length $k$ as a minor.
\end{proof}

Now \cref{thm:agile_character} also gives a proof of \cref{prop:agileK24}:
\begin{proof}[Proof of \cref{prop:agileK24}]
If $G$ contains a large enough agile set, then $G$ contains, by 
\cref{thm:agile_character}, either a $K_{2,4}$ or a regular strip of length $4$ 
as a minor. Such a strip, however, also contains $K_{2,4}$ as a minor, which 
proves \cref{prop:agileK24}.
\end{proof}

\section{Generalizations}\label{sec:generalizations}

Let us now look at possible variations of the notion of an agile 
set. One such natural variation is the following: instead of just 
partitioning our set $X$ into two subsets, we might allow partitions into more 
partition classes and try to connect the vertices in each of these 
classes disjointly. More precisely, let us say that, given a graph $G=(V,E)$ 
and an integer $m$, a 
set $X \subseteq V$ is \emph{$m$-agile} in $G$ if for every partition $X = 
X_1\cupdot\dots\cupdot X_m$ (where we allow empty partition classes) there are 
vertex-disjoint connected subgraphs 
$T_1,\dots, T_m \subseteq G$ such that $X_i \subseteq T_i$. So, $X$ is 
$2$-agile if and only if $X$ is agile.
If a set $X$ is $m$-agile for every $m$, we say that $X$ is \emph{dexterous}. 
Note that this is equivalent to $X$ being 
$\left\lceil\frac{\abs{A}}{2}\right\rceil$-agile. 

Again, containing an $m$-agile or a dexterous set is closed under the minor 
relation in that, if $H$ is a minor of $G$ and $H$ contains an 
$m$-agile or dexterous set of size $k$, say, then $G$ also contains an 
$m$-agile 
or dexterous set of size $k$.

We again try to characterize, qualitatively, the existence of a large $m$-agile 
or dexterous set via a minor. A natural graph containing an $m$-agile set of 
size $k$ is the complete bipartite graph $K_{m,k}$. On the other hand the 
complete graph $K^m$ contains a dexterous set of size $m$. 

Another example of such graphs can be found in grids and, since neither $K^5$ 
not $K_{3,3}$ is a minor of the grid, these grids are another class of graphs 
containing a large dexterous or $m$-agile set. For dexterous sets we need to 
take a quadratic grid:
\begin{EX}
  In the $N^2\times N^2$--grid taking every $N$\textsuperscript{th} vertex of 
the diagonal gives a dexterous set of site $N$.
\end{EX}
Of course, since every dexterous set is $m$-agile for every $m$, the quadratic 
$N^2\times N^2$-grid also contains an $m$-agile set of size $N$. But such a set 
can actually already be found in a rectangular grid where the small side just 
needs to have size $2m-1$:
\begin{EX}
The $(2m-1)\times ((N-1)m+1)$ grid contains an $m$-agile set of size $N$:
if we denote the vertices of the grid by \[\{v_{i,j}\mid 1\le i \le 2m-1, 1\le 
j \le N(m-1)\},\] then the set $\{v_{m-1,jk+1}\mid 0\le j\le N-1\}$ is 
$m$-agile.
 How to construct the required trees is illustrated in the following 
picture: 
\begin{center}
\begin{tikzpicture}[rotate=90]
  \newcommand\gridwidth{4}
  \newcommand\agilesize{13}
  \edef\agileseed{464163546}
  \pgfmathsetmacro{\agilesizeminus}{\agilesize - 1}
  \pgfmathsetmacro\length{\gridwidth * \agilesize}
  \newcommand\xstep{.4cm}
  \newcommand\ystep{.2cm}
  \draw[ystep=\ystep,xstep=\xstep,lightgray] (-\gridwidth*\xstep,0) grid 
(\gridwidth*\xstep,\length*\ystep);
  \pgfmathsetseed{\agileseed}
  \pgfmathsetmacro{\class}{0}
  \foreach \y in {0,\gridwidth,...,\length} {
    \ifnum\y>0 \pgfmathrandominteger{\class}{0}{\gridwidth} \fi
    \pgfmathsetmacro{\hue}{(\class/\gridwidth)^1.715*0.8}
    \definecolor{pathcolor}{hsb}{\hue,.5,.8}
    \fill[pathcolor] (0, \y*\ystep) circle [radius=.3*\ystep];
    \draw[] (0, \y*\ystep) circle [radius=.3*\ystep];
  }
    \foreach \pi in {0,...,\gridwidth} {
      \pgfmathsetseed{\agileseed}
      \pgfmathsetmacro{\hue}{(\pi/\gridwidth)^1.715*0.8}
      \definecolor{pathcolor}{hsb}{\hue,.5,.8}
      \pgfmathsetmacro{\lastclass}{0}
      \draw[pathcolor,very thick] (\pi*\xstep,0)
      \foreach \pos [remember=\class as \lastclass] in {0, ..., 
\agilesizeminus} {
        \pgfextra
        \pgfmathrandominteger{\class}{0}{\gridwidth}
        \endpgfextra
        \ifnum \class>\lastclass
          |- (-\class*\xstep + \pi * \xstep,\pos*\gridwidth*\ystep + 
\pi * \ystep) -- (-\class*\xstep + \pi*\xstep, \pos*\gridwidth*\ystep + 
\gridwidth*\ystep)
        \else 
          |- (-\class*\xstep + \pi * \xstep,\pos*\gridwidth*\ystep + 
\gridwidth*\ystep - \pi * \ystep) -- (-\class*\xstep + \pi*\xstep, 
\pos*\gridwidth*\ystep + \gridwidth*\ystep)
        \fi
      };
    }

\end{tikzpicture}
\end{center}
\end{EX}
In particular, as every $m$-agile set of size $2m$ is dexterous, the 
${(N-1)\times (\frac{(N-1)N}{2}+1)}$-grid contains a dexterous set of size $N$.

If we seek only for a qualitative result, not just in terms of the size of our 
$m$-agile set but also in terms of $m$, we can 
actually show that, conversely, a large enough $l$-agile set, for large 
enough $l$, forces the existence of either a $K_{m,N}$ or a long rectangular 
grid as a minor. We will show this in \cref{thm:kagile_character}.

But first, we can similarly show that a large enough dexterous set 
forces the existence of a large complete graph or a large quadratic 
grid as a minor.
This result for dexterous sets can be obtained immediately, as the existence of a 
large dexterous set implies that our graph has high tree-width.
This can either be shown directly or by using a result by Diestel, 
Jensen, Gorbunov, and Thomassen \cite{excludedGrid} about so-called $m$-connected sets. 
Following their definition, a vertex set $X$ is \emph{$m$-connected} if 
$\abs{X}\ge m$ and for any two subsets $X_1,X_2\subseteq X$ with 
$\abs{X_1}=\abs{X_2}\le m$ one can find $\abs{X_1}$ many disjoint paths between 
$X_1$ and $X_2$.

This notion of $m$-connected vertex sets is related to our $m$-agile sets in 
that an $m$-agile set is $m$-connected:
\begin{PROP}\label{prop:agile_connected}
 If a graph contains an $m$-agile set $Z$ of size at least $m$, then $Z$ is 
$m$-connected.
\end{PROP}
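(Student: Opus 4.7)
The plan is to derive $m$-connectedness directly from the defining property of $m$-agility, by constructing, for each pair of subsets one wishes to connect by disjoint paths, a tailor-made partition of $Z$ into $m$ classes. I would fix arbitrary subsets $X_1, X_2 \subseteq Z$ with $\abs{X_1} = \abs{X_2} = k \leq m$ and choose any bijection $\phi \colon X_1 \to X_2$ which fixes every vertex of $X_1 \cap X_2$ pointwise. Such a $\phi$ exists because $\abs{X_1 \sm X_2} = \abs{X_2 \sm X_1}$, so any bijection between these two difference sets, extended by the identity on $X_1 \cap X_2$, does the job.

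Next, enumerating $X_1 = \{v_1, \ldots, v_k\}$, I would partition $Z$ into $m$ classes as follows: put $Z_i \coloneqq \{v_i, \phi(v_i)\}$ for $1 \leq i \leq k-1$, lump the leftover vertices into $Z_k \coloneqq \{v_k, \phi(v_k)\} \cup \bigl(Z \sm (X_1 \cup X_2)\bigr)$, and take $Z_{k+1}, \ldots, Z_m$ to be empty. Because $\phi$ fixes $X_1 \cap X_2$ pointwise, the sets $\{v_i, \phi(v_i)\}$ for $1 \leq i \leq k$ are pairwise disjoint, so this truly is a partition of $Z$. Applying $m$-agility then yields pairwise vertex-disjoint connected subgraphs $T_1, \ldots, T_m$ of $G$ with $Z_i \subseteq T_i$ for each $i$.

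Finally, for each $1 \leq i \leq k$, the connected subgraph $T_i$ contains both $v_i \in X_1$ and $\phi(v_i) \in X_2$, and hence a path $P_i$ between these two endpoints (a trivial single-vertex path when $v_i \in X_1 \cap X_2$, so $v_i = \phi(v_i)$). The pairwise vertex-disjointness of the $T_i$ transfers immediately to the $P_i$, producing the $k$ vertex-disjoint $X_1$-$X_2$-paths demanded by the definition of $m$-connectedness. The only subtle point is the possible overlap between $X_1$ and $X_2$, which naive pairwise matching could render ill-defined; insisting that $\phi$ fix $X_1 \cap X_2$ sidesteps this cleanly, and the rest of the argument is little more than an unpacking of definitions.
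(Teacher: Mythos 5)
Your proposal is correct and matches the paper's own proof in essence: pair up the vertices of $X_1$ and $X_2$ by a bijection fixing their intersection, build a partition of $Z$ whose classes are these pairs (with the leftover vertices dumped into one class and the rest left empty), and invoke $m$-agility to get disjoint connected subgraphs from which the required disjoint paths are extracted. The paper phrases the fixed-point requirement as ``$x_i = y_j$ only if $i = j$'' rather than via an explicit bijection $\phi$, but the construction and argument are the same.
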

\begin{proof}
 Let $X,Y\subseteq Z$ such that $\abs{X}=\abs{Y}\le m$. Then we can find 
pairs  of vertices $\{x_1,y_1\},\dots, \{x_{\abs{X}},y_{\abs{X}}\}$ so that $x_i\in X$ and 
$y_i\in Y$ and so that $x_i=y_j$ only if $i=j$. We now construct a partition 
of $Z$ into classes $X_1,\dots, X_{\abs{X}}$ by defining $X_i=\{x_i,y_i\}$ 
whenever $i\ge 2$ and $X_1=X\sm \{x_2,y_2,\dots,x_{\abs{X}},y_{\abs{X}}\}$.

Since $Z$ is $m$-agile, we thus find disjoint trees $T_1,\dots,T_{\abs{X}}$ 
so that $X_i\subseteq T_i$. In particular, $T_i$ contains a path between 
$x_i$ and $y_i$ which shows that $Z$ is $m$-connected.
\end{proof}
As Diestel, Jensen, Gorbunov, and Thomassen showed, the existence of a large 
$m$-connected vertex set is an obstruction to the graph having low tree-width, 
thus the same holds for dexterous sets as well. Concretely, they showed the 
following:
\begin{PROP}[\cite{excludedGrid}*{Proposition 
3(i)}]\label{prop:treewidth_connected_set}
 Let $G$ be a graph and $k>0$ an integer. If $G$ has tree-width $<k$ 
then $G$ contains no $(k+1)$-connected set of size~$\ge 3k$.
\end{PROP}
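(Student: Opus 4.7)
The plan is to argue by contradiction: suppose $G$ has a tree-decomposition $(T,\cV)$ of width strictly less than $k$ (so every bag has size at most $k$) while containing a $(k+1)$-connected set $X$ of size at least $3k$, and derive a structural contradiction from how $X$ must be distributed across the bags of $T$.

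My first step is to observe that Menger's theorem, combined with $(k+1)$-connectedness, severely restricts this distribution. For every edge $e = tt'$ of $T$, the intersection $V_t \cap V_{t'}$ is a separator of $G$ of size at most $k$, inducing a separation $(A_e, B_e)$. If both outer parts $X\cap(A_e\setminus B_e)$ and $X\cap(B_e\setminus A_e)$ contained at least $k+1$ elements of $X$, then $(k+1)$-connectedness would yield $k+1$ vertex-disjoint paths between fixed $(k+1)$-subsets of these parts, all of which would have to run through a separator of size at most $k$---contradicting Menger's theorem. Hence for every edge $e$, one of the outer parts contains at most $k$ vertices of~$X$.

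Next I would orient each edge of $T$ towards its heavier outer part and pick a sink $t^*$, at which every neighbour's outer part is light. Using the standard tree-decomposition identity that the outer parts at different neighbours of $t^*$ are disjoint in $V\setminus V_{t^*}$, summing yields the bound $|X|\leq |V_{t^*}\cap X| + k\cdot\deg(t^*) \leq k(1+\deg(t^*))$.

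The main obstacle is that this bound is too weak whenever $t^*$ has large degree in $T$. To close the gap I would replace the sink by a centroid of $T$, weighted by assigning each $x\in X$ to a fixed bag containing it; such a centroid $t^*$ gives a genuine $\tfrac{1}{2}$-balanced separator for $X$, meaning each component of $G - V_{t^*}$ contains at most $|X|/2$ vertices of $X$. A careful combinatorial split of the components---distinguishing whether a single component already carries $\geq k+1$ vertices of $X$ or every component carries $\leq k$---then produces disjoint subsets $X_1,X_2\subseteq X\setminus V_{t^*}$ of size $k+1$ each that lie in disjoint unions of components of $G-V_{t^*}$. Menger's theorem applied to $(X_1,X_2)$ then yields at most $|V_{t^*}|\leq k$ vertex-disjoint $X_1$--$X_2$ paths, contradicting $(k+1)$-connectedness. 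The truly delicate point, which is what forces the precise hypothesis $|X|\geq 3k$ rather than something larger, lies exactly in ensuring this final split.
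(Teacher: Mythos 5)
This statement is cited by the paper from Diestel--Jensen--Gorbunov--Thomassen and is not proved there, so the comparison is to the known proof of that cited proposition. Your overall strategy---Menger on each induced separation, orient the tree, find a special node, split the branches into two sides, apply Menger once more---is the right one, but as you yourself suspect, the argument as written cannot be closed, and the reason is identifiable.

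The gap is the bound ``one outer part contains at most $k$ vertices of $X$''. With separators of size $\le k$ this is the best Menger gives, but it is too weak: at a sink $t^*$ the branches each carry $\le k$ vertices of $X$ and sum to at least $|X|-k\ge 2k$, and from branches of size up to $k$ summing to only $2k$ one \emph{cannot} in general split off two disjoint groups each of size $\ge k+1$ (e.g.\ three branches of size $k$ each with $X\cap V_{t^*}=\emptyset$). The missing step is the normalisation that one may assume $V_t\neq V_{t'}$ for every tree edge $tt'$ (contract any edge with equal bags; if $T$ collapses to a point then $|X|\le k<3k$, done). Then every induced separator has size $\le k-1$, and since the two strict sides together contain at least $|X|-(k-1)\ge 2k+1$ elements of $X$, Menger (applied with $|X_1|=|X_2|=k\le k+1$) forces one strict side to have at most $k-1$ elements of $X$ and the other at least $k+2$. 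Orienting each edge toward its heavy side and taking a sink $t^*$ now gives branches of size $\le k-1$ each. Greedily filling one group of branches until its $X$-count first reaches at least $k+1$ keeps it at most $2k-1$, so the remaining branches together with $X\cap V_{t^*}$ still carry at least $|X|-(2k-1)\ge k+1$ elements of $X$; the two resulting sets lie in the two sides of the separation with separator $V_{t^*}$ (size $\le k$), and every one of the required $k+1$ disjoint paths must use a distinct vertex of $V_{t^*}$---contradiction.

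Two smaller points: the switch from sink to centroid does not help and is in fact a step backwards (the centroid bound $\le |X|/2$ per branch is weaker than the sink bound $\le k$ once $|X|\ge 2k$); and the case split you propose (``one component carries $\ge k+1$'' vs.\ ``all $\le k$'') does not produce the desired disjoint $(k+1)$-sets under either bound, for the same arithmetic reasons. So the honest uncertainty you flag at the end is a genuine gap, and the missing ingredient is precisely the reduction to a tree-decomposition with pairwise distinct adjacent bags.
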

Together with the grid theorem by Robertson and Seymour \cite{GMV},
which states that a graph of large enough 
tree-width needs to contain an $N\times N$-grid as a minor, this directly 
implies a qualitative relation 
between the existence of a dexterous set and a grid minor:
\begin{THM}
There is a function $f\colon\N\to\N$ such that every graph containing a 
dexterous set of size at least $f(N)$ also contains the $N\times N$-grid as a 
minor
\end{THM}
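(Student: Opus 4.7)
The plan is to chain together the two Propositions just stated with the classical Robertson--Seymour grid theorem, so this theorem is essentially a direct corollary of the preceding material.

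First I would invoke the definition: a dexterous set $Z$ of size $s$ is $m$-agile for every $m$, in particular for $m = \lceil s/2 \rceil$. By \cref{prop:agile_connected}, applied with this choice of $m$, the set $Z$ is then $\lceil s/2 \rceil$-connected. Note that $m$-connectedness is monotone downwards in $m$, so $Z$ is also $m'$-connected for every $m' \le \lceil s/2 \rceil$.

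Next I would use \cref{prop:treewidth_connected_set} in its contrapositive form: if a graph $G$ contains a $(k+1)$-connected set of size $\ge 3k$, then $G$ has tree-width at least $k$. Given $N$, let $w(N)$ denote the tree-width threshold above which the grid theorem of Robertson and Seymour \cite{GMV} guarantees an $N\times N$-grid minor, and set $f(N) \coloneqq 3 w(N)$. Suppose $G$ contains a dexterous set $Z$ with $|Z| \ge f(N) = 3 w(N)$. Then $\lceil |Z|/2 \rceil \ge \lceil 3w(N)/2 \rceil \ge w(N)+1$ (for $w(N)\ge 2$, which we may assume), so by the first step $Z$ is $(w(N)+1)$-connected, and it has size $\ge 3 w(N)$. \cref{prop:treewidth_connected_set} then yields that $G$ has tree-width at least $w(N)$, and the grid theorem delivers the $N\times N$-grid as a minor of $G$.

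There is no genuine obstacle here: the only thing to check is that the two quantitative conditions of \cref{prop:treewidth_connected_set} (being $(k+1)$-connected \emph{and} having size $\ge 3k$) can be arranged simultaneously from the single parameter $|Z|$, which is exactly why the factor $3$ in the definition of $f$ suffices. Everything else is an application of already-stated results.
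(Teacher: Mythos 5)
Your proposal is correct and follows essentially the same route as the paper: both pass from a dexterous set to an $m$-connected set via \cref{prop:agile_connected}, to high tree-width via \cref{prop:treewidth_connected_set}, and then to a grid minor via the Robertson--Seymour grid theorem. The only difference is that you spell out the arithmetic (choosing $m=\lceil s/2\rceil$ and checking the $3k$/$(k+1)$ thresholds explicitly) which the paper glosses over by simply asserting tree-width $\ge k/3-1$.
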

\begin{proof}
If a graph contains a dexterous set of size at least $k$, it has, by 
\cref{prop:treewidth_connected_set} and \cref{prop:agile_connected}, tree-width 
at least $\frac{k}{3}-1$. However, by the grid minor theorem \cite{GMV} (see 
also \cite{excludedGrid}*{Theorem 2}) there is a 
function $f$ such that every graph of tree-width $>f(N)$ contains an $N\times 
N$-grid as a minor.
\end{proof}

For large $m$-agile sets we will be able to show that their existence is, 
again qualitatively, characterized by $K_{m,k}$ and rectangular grid minors. 
Again we can build on existing literature on variations of the grid theorem. 
This time we will utilize a generalized version of the grid 
theorem obtained by Geelen and Joeris \cite{GeneralizedGrid}*{Theorem 
9.3}. 
To state 
this theorem, we need the following additional definition from that paper. 
Given parameters $t,l,n$ a \emph{$(t,l,n)$-wheel} is a graph obtained from a 
tree $T$ 
with $t$ vertices, a set $Z$ of size $l$, a permutation $\pi\colon V(T)\to V(T)$ and 
a 
function $\psi\colon Z\to V(T)$ via the following construction:
we start with $n$ disjoint copies of $T$, called $T_1,\dots,T_n$. Let us denote 
the copy of $v\in V$ in $T_i$ as $v_i$. We then add an edge between $v_i$ and 
$v_{i+1}$ for any vertex $v\in V$ and any index $i$ between $1$ and $n-1$.
Then we add an edge between $v_n$ and $w_1$ where $w=\pi(v)$. As a last 
step, for every $z\in Z$ and $z=\psi(z)$, we add an edge between $z$ and every 
$v_i$.

A \emph{$(\theta,n)$-wheel} is any graph which is a $(t,l,n)$-wheel for some 
$t,l\in \N$ satisfying $2t+l=\theta$.

\cite{GeneralizedGrid}*{Theorem 9.3} by Geelen and Joeris now implies the 
following:
\begin{THM}[\cite{GeneralizedGrid}]\label{thm:generilsed_grid}
 There exists a function ${f\colon \N^2\to \N}$ such that, given 
$\theta,n\in \N$ 
with 
$\theta\ge 2$ and $n\ge 3$, every graph $G$ containing a $\theta$-connected set 
$U$ of size at least $f(\theta,n)$ contains a $K_{\theta,n}$ or a 
$(\theta,n)$-wheel as a minor.
\end{THM}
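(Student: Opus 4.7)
The plan is to derive this statement from the Robertson--Seymour grid-minor theorem together with a careful analysis of how the $\theta$-connected set $U$ attaches to a large grid minor of $G$.

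First, I would promote large $\theta$-connectivity to large tree-width. A strengthening of \cite{excludedGrid}*{Proposition~3(i)} shows that a $\theta$-connected set of size $\ge 3k$ already forces tree-width $\ge k$: any tree-decomposition of width $<k$ would contain a bag separating $U$ into two roughly balanced parts through fewer than $\theta$ vertices, contradicting $\theta$-connectivity. Applying the grid-minor theorem then yields, for $f(\theta,n)$ chosen large enough, an $M\times M$-grid minor of $G$ with $M$ arbitrarily large relative to $\theta$ and $n$.

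Next, I would exploit $\theta$-connectivity to route vertices of $U$ onto this grid. Using Menger together with the size of $U$, one obtains a large sub-collection $U' \subseteq U$ together with disjoint attachment systems from each $u \in U'$ into a fixed grid minor. I would then apply a Ramsey-type argument to $U'$ based on the ``attachment type'' of each vertex, namely which columns and rows of the grid its branch set meets. This produces a very large sub-collection $U'' \subseteq U'$ whose attachments are pairwise of the same combinatorial shape. Now the dichotomy emerges: either the attachments of $U''$ cluster around a small common set of grid-vertices --- in which case $\theta$-connectivity lets us route $K_{\theta,n}$ through those hubs --- or the attachments are spread out along a long interval of the grid, in which case the $n$ copies $T_1,\dots,T_n$ of the wheel are read off from $n$ equally spaced segments of the grid, the cyclic twist $\pi$ comes from the wrap-around forced by re-using the grid, and the common repeated attachment shape yields both the tree $T$ and the apex set $Z$.

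The main obstacle will be making the arithmetic $2t+l=\theta$ come out exactly. Any soft version of this argument readily yields a minor built from a grid backbone plus apex vertices whose separators use some function of $\theta$ vertices, but forcing the separators to use \emph{exactly} $\theta$ vertices requires the apex set $Z$ and the tree $T$ to be balanced precisely against the connectivity budget. This is presumably the combinatorial core of \cite{GeneralizedGrid}*{Theorem~9.3}, and I would match it by iterating the extraction procedure: if the resulting separator uses more than $\theta$ vertices, a pigeonhole step on the apex attachments lets me delete a column of the grid and merge two apex vertices into a single tree-vertex, reducing $l$ by one and increasing $t$ by one (preserving $2t+l$), and then repeating until the connectivity bound is tight.
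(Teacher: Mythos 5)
The paper does not actually prove this statement: it is quoted as a citation of Theorem~9.3 of Geelen and Joeris \cite{GeneralizedGrid}, so there is no ``paper's own proof'' to compare against.

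That aside, your proposed proof has a fatal gap at the very first step. You claim that a $\theta$-connected set of size $\ge 3k$ forces tree-width $\ge k$; this is false. In \cite{excludedGrid}*{Proposition 3(i)} the tree-width bound is tied to the \emph{connectivity} of the set, not its \emph{size}: a $(k+1)$-connected set of size $\ge 3k$ forces tree-width $\ge k$. If $\theta$ is held fixed and $U$ is made large, the tree-width guaranteed is only about $\theta$, a constant. Your own heuristic justification already reveals the issue: a bag in a tree-decomposition of width $<k$ separates through fewer than $k$ vertices, which contradicts $\theta$-connectivity only when $\theta > k$. Concretely, a long cycle $C_n$ contains a $2$-connected set of size $n$ yet has tree-width~$2$, so no amount of enlarging $U$ yields a grid minor. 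This is precisely why the wheel outcome is needed: the $(2,n)$-wheel with $t=1$, $l=0$ is just $C_n$, and the whole content of the Geelen--Joeris theorem for small $\theta$ is to detect a long cyclic ``wheel'' structure inside a graph of bounded tree-width, where the grid-minor theorem gives nothing. The remainder of your plan — extracting a grid, Ramsey on attachment types, and the bookkeeping for $2t+l=\theta$ — all relies on having a grid of unbounded size and so never gets off the ground. A correct proof must produce one of the two outcomes directly from the $\theta$-connected set (as Geelen and Joeris do with their tangle-based machinery) rather than routing through the grid-minor theorem.
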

Using this result, we can now show that the existence 
of an $m$-agile set is indeed characterized by the existence of a large 
rectangular grid or a large complete bipartite graph as a minor. Concretely, 
we can show the following:
\begin{THM}\label{thm:kagile_character}
There is a function $f\colon \N^2\to\N$ such that every graph containing an 
$((m-1)2m+1)$-agile set of size at least $f(m,k)$ contains $K_{m,k}$ or the 
$((2m-1)\times k)$-grid as a minor.
\end{THM}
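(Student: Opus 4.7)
Setting $\ell := (m-1)\cdot 2m + 1$, my plan is to combine \cref{prop:agile_connected} with the generalised grid theorem \cref{thm:generilsed_grid}: the former turns an $\ell$-agile set of size $\geq \ell$ into an $\ell$-connected one, so for $f(m,k)$ sufficiently large and $n := \max(k, 3)$ the latter produces either $K_{\ell,n}$ or an $(\ell,n)$-wheel as a minor of $G$. In the $K_{\ell,n}$ case, since $\ell \geq m$ and $n \geq k$, $K_{m,k}$ is immediately a subminor, and I am done.

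It remains to handle a $(t,l,n)$-wheel with $2t+l=\ell$, for some tree $T$ on $t$ vertices and apex set of size $l$. If $l \geq m$, then contracting each of the $n$ copies of $T$ in the wheel to a single vertex yields $K_{l,n}$, hence $K_{m,k}$; so I may assume $l \leq m-1$, which forces $t \geq ((m-1)(2m-1)+1)/2$, making $T$ a ``large'' tree. I would then split on the shape of $T$. First, if $T$ contains a path $p_1 \cdots p_{2m-1}$, then the ``rows'' $\{(p_j)_i : 1 \leq i \leq n\}$ for $j = 1, \dots, 2m-1$, together with the wheel's tree and vertical edges, form a $(2m-1) \times n$-grid minor (cutting the permutation edge if necessary), which for $n \geq k$ already contains the desired grid. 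Second, if $T$ has a vertex $u$ of degree at least $m-l$ with distinct neighbours $v_1, \dots, v_{m-l}$, then the $l$ apex vertices together with the rows $\{(v_j)_i : 1 \leq i \leq n\}$ serve as the $m$ hubs of a $K_{m,k}$, while the single vertices $u_{i_1}, \dots, u_{i_k}$ in $k$ distinct columns serve as its $k$ spokes.

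The main obstacle will be showing that the calibration $\ell = (m-1)2m+1$ guarantees that one of these two structural cases always applies in the wheel. A naive dichotomy between a long path and a high-degree vertex in $T$ can fail when $T$ is simultaneously shallow (radius $\leq m-1$) and only moderately branching (maximum degree $\leq m-l-1$), since trees of that shape can still have size polynomial in $m$, so the lower bound on $t$ alone does not immediately rule out the bad regime. I therefore anticipate having to replace single row-hubs by fat hubs built from whole subtrees of $T$ spanning several columns via the vertical edges, and possibly invoking the permutation edges to produce spokes that cross several column intervals. Carrying out this last step---rather than the application of Geelen--Joeris---is where the bulk of the combinatorial work will lie.
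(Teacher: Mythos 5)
Your first half is exactly the paper's route: \cref{prop:agile_connected} turns the $((m-1)2m+1)$-agile set into an $((m-1)2m+1)$-connected one, \cref{thm:generilsed_grid} yields $K_{(m-1)2m+1,k}$ or a wheel, and the complete bipartite outcome is immediate. The gap is where you suspected it, in the wheel case, and it is genuine: your dichotomy ``long path in $T$ versus high-degree vertex in $T$'' is quantitatively false. A tree of radius $m-1$ and maximum degree $m-l-1$ can have on the order of $(m-l-1)^{m-1}$ vertices, vastly more than the bound $t\ge\bigl((m-1)(2m-1)+1\bigr)/2\approx m^2$ you extract, so the size of $T$ alone forces neither of your two cases; no amount of recalibrating $\ell$ polynomially will fix this. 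Your high-degree construction also has a local flaw: an apex $z$ is adjacent only to the $n$ copies of $\psi(z)$, not to every column vertex, so your spokes $u_{i_1},\dots,u_{i_k}$ need not see the apex hubs at all.

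The repair is simpler than the ``fat hubs plus permutation edges'' machinery you anticipate. Attach each apex $z\in Z$ to $\psi(z)$ as a new leaf, obtaining a tree $T'$ on $t+l\ge (m-1)m+1$ vertices, and replace degree-versus-radius by a \emph{leaves}-versus-path dichotomy: a tree with fewer than $m$ leaves is the union of fewer than $m$ root-to-leaf paths, so if it also contains no path on roughly $2m$ vertices it has only $O(m^2)$ vertices --- unlike yours, this dichotomy is valid at the quadratic calibration $(m-1)2m+1$. If $T'$ has $m$ leaves, the $K_{m,k}$ is built with the roles of your branch sets partly swapped: the $m$ hubs are the rows of copies of the $m$ leaves (respectively the apex singletons $\{z\}$), and the $k$ spokes are not single column vertices but the whole pruned columns $T_i\setminus\bigcup_{v\in L}X_v$. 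These stay connected precisely because only leaves of $T$ are removed, each meets every leaf-row in an edge of $T_i$, and each contains $\psi(z)_i$ and hence a neighbour of every apex. The long-path case of $T'$ restricts to a path in $T$ on $2m-1$ vertices and gives the $(2m-1)\times k$-grid as in your sketch. So: right framework, but the decisive combinatorial step needs to count leaves, not degrees.
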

\begin{proof}
Every such $((m-1)2m+1)$-agile set is, by \cref{prop:agile_connected}, also
$((m-1)2m+1)$-connected, 
thus by \cref{thm:generilsed_grid} there is a function $f$ such that 
every graph containing an $((m-1)2m+1)$-agile set of size at least $f(m,k)$ 
either 
contains $K_{(m-1)2m+1,k}$ or an $((m-1)2m+1,k)$-wheel as a minor. We are now 
going to show that such a wheel contains a $K_{m,k}$ or a $(2m-1)\times k$-grid 
as 
a minor. For this recall that such a wheel was constructed using a tree $T$ of 
size $t$, say, and a set $Z$ of central vertices of size $z$, say, so that 
$2t+z=(m-1)2m+1$. 

Consider the tree $T'$ obtained from $T$ by adding every 
vertex in $Z$ as a leaf to $T$ in such a way that $z\in Z$ is adjacent to its 
neighbour $\psi(z)$ in $T$. Then $T'$ has at least 
$(m-1)m+1$ many vertices. Thus, $T'$ has at least $m$ leaves or 
contains a path of length at least $2m+1$. 

If $T'$ contains a set $L$ of $m$ 
leaves, we can construct a $K_{m,k}$-minor in $G$ as follows: 
for every leaf $v\in L$ of $T'$, if $v$ is a vertex of $T$ we let $X_v$ be the 
set of all $v_i\in T'$. If $v\in Z$ then $X_v=\{z\}$. Clearly every $X_v$ is 
connected and the sets $X_v$ will be the branch sets of the vertices of degree 
$k$ in our $K_{m,k}$-minor. For the vertices of degree $m$ we now take, for 
every $1\le i\le k$, the rest of $T_i$, i.e.\ the set $X_i=T_i\sm \bigcup_{v\in 
L} X_v$. Since every $v\in L$ is a leaf of $T$, the set $X_i$ is connected in 
the 
wheel. Moreover, each $X_i$ has a neighbour in $X_v$ for every $v\in L$ and the 
$X_i$ and $X_v$ are all pairwise disjoint, which completes the construction of 
our $K_{m,k}$-minor.

If on the other hand $T'$ contains a path of 
length at least $2m+1$, then $T$ needs to contain a path $P$ of length $2m-1$, 
as the vertices in $Z$ were only added as leafs to $T$. This $P$ directly 
corresponds to the $(2m-1)$-columns of a $(2m-1)\times k$-grid minor in $G$, 
i.e.\ 
the 
restriction of the wheel to the set of all those $v_i$ for which $v\in P$, 
equals an $(2m-1)\times k$-grid, except for some additional edges.
\end{proof}

We remark that it can actually be shown, with some amount of bookkeeping, that a 
$(t,l,k(\frac{t+l}{2}-1))$-wheel itself induces a 
$\left(\frac{t}{2}+l\right)$-agile set of 
size $k$.
Thus, we could as well have formulated \cref{thm:kagile_character} in terms of 
a corresponding wheel instead of a regular grid.

Let us end this paper with one final observation regarding 
\cref{question:agile_k2n}. While we have seen that the existence of a large 
$2$-agile set alone is not enough to guarantee the existence of a 
$K_{2,k}$-minor, due to the regular strips, it turns out that, by requiring the 
existence of a large $3$-agile set, we can actually guarantee the existence of 
a $K_{2,k}$-minor.

\begin{THM}
There exists a function $f\colon\N\to\N$ such that every graph containing a 
$3$-agile set of size at least $f(k)$ also contains a $K_{2,k}$-minor.
\end{THM}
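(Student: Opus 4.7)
The plan is to follow the structural proof of \cref{thm:agile_character} with $3$-agile sets replacing $2$-agile sets, observing that the stronger hypothesis eliminates every case that previously produced a regular strip. The preliminary reductions --- the analogs of \cref{2-connected}, \cref{fact:agile_seps} and \cref{lem:seps_nested} --- all carry over to $3$-agility with essentially the same arguments, so I would begin by assuming $G$ is $2$-connected and minor-minimal with the property of containing a $3$-agile set $X$ of size $f(k)$, and by forming the tree-decomposition of $G$ induced by the regular order-$2$ separations; its torsos are $3$-connected and each piece of $X$ is $3$-agile in the torso it lives in.

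The key new ingredient is that $3$-agility makes the chain case of \cref{lem:sep_sequence} vacuous. Given a chain $(A_1, B_1) \le \cdots \le (A_n, B_n)$ of order-$2$ separations with $x_i \in X \cap (B_i \sm B_{i+1})$ for each $i$ and $n \ge 6$, I would partition $X$ into three classes by the residue of the index $i$ modulo $3$. By $3$-agility there are three pairwise disjoint connected subgraphs $T_0, T_1, T_2$ realizing this partition, and each $T_j$ contains some $x_i$ with $i \in \{1,2,3\}$ and some $x_{i'}$ with $i' \in \{4,5,6\}$ of residue $j$ modulo $3$, and must therefore meet the separator $A_4 \cap B_4$ --- absurd, because three pairwise disjoint subgraphs cannot share a $2$-vertex set. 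The same principle will rule out large $3$-agile sets in strip torsos, since a strip contains a long chain of internal $2$-separations between consecutive non-crossing chords.

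With these obstructions in place, the internally $3$-connected case is handled as in \cref{thm:agile_character}: a torso without a $K_{2,k}$-minor lies in $\cA_m$ by \cref{thm:Ding} and is obtained from a graph of at most $m$ vertices by adding at most $m/2$ fans and strips (\cref{fact:bounded_strips}). Pigeonholing a linear-size piece of the $3$-agile set into a single fan or strip, fans are excluded by the $3$-agile analogue of \cref{fact:agile_fan}, and strips are excluded by the paragraph above. Hence every internally $3$-connected graph with a large $3$-agile set contains a $K_{2,k}$-minor. The star case of the proof of \cref{thm:agile_character} already produces $K_{2,k}$ directly, and the contraction reducing it to the internally $3$-connected case preserves $3$-agility of the surviving subset of $X$ just as it preserves $2$-agility in the original argument; assembling the pieces yields the desired~$f$.

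The main obstacle I foresee is the strip bound: the chain of order-$2$ separations inside a strip is \emph{not} a chain of order-$2$ separations of its corner-complete torso, because the torso makes all four corners pairwise adjacent and introduces bypass paths between the two ends. One therefore has to quantify how much bypass capacity the $K_4$ on the corners can supply to three disjoint subgraphs, and show that this bounded buffer cannot rescue the obstruction for arbitrarily long strips.
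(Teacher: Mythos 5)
Your proposal takes a genuinely different route from the paper. The paper proves this theorem by a short, self-contained argument: take a normal spanning tree $T$ of the (reduced, $2$-connected) graph; either $T$ has a vertex of degree $\geq k+1$, which together with normality and $2$-connectedness yields a $K_{2,k}$-minor immediately, or $T$ contains a long path $P$ carrying many attachment points of the $3$-agile set $X$. Partitioning $X$ by residue modulo $3$ along $P$ gives three disjoint trees, and either some long subpath of $P$ misses one of them (giving $K_{2,k}$ via the normal-spanning-tree structure) or every long subpath meets all three, and a pigeonhole argument produces $K_{2,k}$. No Ding-type structure theorem, no nested-separation tree-decomposition, no fan/strip analysis.

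You instead recycle the whole machinery behind \cref{thm:agile_character}. Your central observation --- that $3$-agility trivializes the chain case of \cref{lem:sep_sequence} --- is correct and is a genuine shortcut: with $x_1,\dots,x_6$ in consecutive layers and partitioned by residue modulo $3$, each of the three disjoint trees must meet the two-vertex separator $A_4 \cap B_4$, which is impossible, whereas the $2$-agile case required the elaborate free/weakly-free/restrictive analysis. The reductions you assert (the $3$-agile analogues of \cref{2-connected}, \cref{fact:agile_seps}, \cref{lem:seps_nested}, and the star case) do carry over.

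However, the gap you flag in the strip case is genuine and more serious than you indicate. The chain argument is precisely the tool you want to use inside a strip, and it breaks there: in the corner-complete torso of a strip, the $K_4$ on the corners $\{a,b,c,d\}$ destroys every internal order-$2$ separation, turning a middle cut $\{s,t\}$ into an order-$4$ cut $\{s,t,a,b\}$, and an order-$4$ cut does not obstruct three disjoint trees. So the strip exclusion needs its own separate lemma --- a $3$-agile analogue of \cref{lem:agile_strips} --- and "bound the bypass capacity of the $K_4$" is a direction, not a proof; the crossing-chord case in particular needs attention, since there the strip can contain arbitrarily long regular strips as minors and the bypass structure is not a single $K_4$ at the ends. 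Until such a lemma is in place, \cref{lem:internally_3} (the engine of your reduction) is not available for $3$-agile sets, and the proof does not close. The paper's normal-spanning-tree argument is worth adopting here precisely because it sidesteps this entire issue.
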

\begin{proof}
Let $G$ be a graph containing a $3$-agile set $X$ of size $f(k)\coloneqq N>4$. Like in 
\cref{2-connected} we may assume that $G$ is $2$-connected: if $(A,B)$ is a 
separation of $G$ such that $A\cap B$ contains at most one vertex, then either 
$A$ or $B$ contains only one vertex of $X$, otherwise $X$ would not be 
$3$-agile. So suppose that $A$ contains at most one vertex from $X$, we claim 
that $G'\coloneqq G-(A\setminus B)$ is a subgraph of $G$ which also contains a 
$3$-agile set of size $N$. Indeed, it is easy to check that $(X\cap B) \cup (A\cap 
B)$ is such a $3$-agile set in $G'$.

Thus, by taking a minimal subgraph of $G$ containing a $3$-agile set of 
size $N$, we may suppose that $G$ is $2$-connected.

Let $T$ be a normal spanning-tree of $G$ with root $r$. Since $\abs{G}\ge N$, by taking $N > 9k^3$ we can ensure that $T$ either contains a vertex of degree at least 
$k+1$ 
or a path $P$ and at least $n = 9k^2$ vertices $v_1,\dots,v_n$ (enumerated 
starting from the root of $T$) on $P$ such that there is a component $C_i$ of 
$T-P$ with neighbour $v_i$ in $T$ (possibly the empty component) with the 
property that 
$C_i\cup \{v_i\}$ contains a vertex in $X$.

If $T$ contains a vertex of degree at least 
$k+1$, 
it is easy to find the desired $K_{2,k}$-minor, as follows: if $t\in T$ has 
degree at 
least $k+1$, there are $k$ distinct components of $G-t$ which do not contain 
the root. Now since $T$ is normal and $G$ is $2$-connected, each of these 
components needs to send an additional edge to the path $P$ in $T$ between $r$ 
and $t$. Thus, taking all the components as the branch sets of the degree-$2$ 
vertices, the vertex $v$ itself as the branch set of one of the degree-$k$ 
vertices and the vertices of $P-t$ as the branch set of the other degree-$k$ 
vertex gives the desired $K_{2,k}$-minor.

So we may suppose that $T$ does not contain a vertex of degree $k+1$, and thus 
contains a path $P$ starting in $r$ with $n$ vertices $v_1,\dots,v_n$ on $P$ such that there is a component $C_i$ (possibly the empty set) of 
$T-P$ with neighbour $v_i$ in $T$ with the 
property that 
$C_i\cup \{v_i\}$ contains a vertex in $X$, which we call~$x_i$.

We partition $X$ into three disjoint sets $A\cupdot B\cupdot C$ so that 
$A$ contains all $x_i$ where $i$ is divisible by $3$, and 
$B$ contains all $x_i$ where $i \equiv 1 \mod 3$,
and $C$ contains all $x_i$ where $i \equiv 2 \mod 3$. Since $X$ is $3$-agile, 
there 
are disjoint trees $T_0,T_1,T_2\subseteq G$ such that 
$A\subseteq T_0$, $B\subseteq T_1$, and $C\subseteq T_2$. We may assume without 
loss of generality that every vertex on $P$ belongs to either $T_0$ or $T_1$ or 
$T_2$. We say that a subpath $P'$ of $P$ is \emph{$T_l$-free} if no vertex on 
$P'$ is contained in $T_l$.

Now if $P$ contains a $T_l$-free subpath $P'$, for some $l$, which 
contains $3k$ of the $v_i$, say $v_j,\dots,v_{j+3k-1}$, then 
we find the desired $K_{2,k}$-minor: there are at least $k$ of the vertices 
$v_j,\dots,v_{j+3k-1}$, for which the corresponding $x_i$ lies in $T_l$. In 
particular, the corresponding $C_i$ are met by $T_l$. Since $T$ is a 
normal spanning tree, there therefore exists, for every such $C_i$, an adjacent 
vertex on $rTv_j - v_j$ which lies in $T_l$. But now we obtain our desired $K_{2,k}$-minor by 
taking as one of the vertices of high degree the subpath $P'$, as the other one 
the path $rTv_j  - v_j$ and as the vertices of degree $2$ the components $C_i$ 
mentioned above.
Hence, there cannot be such a $T_l$-free subpath containing $3k$ of the $v_i$, so every 
subpath $P'$ of $P$ which contains at least $3k$ of the $v_i$ meets all three $T_l$.

In particular, if we partition $P$ into subpaths $P_1, P_2,\dots, P_{3k}$ 
each containing $3k$ of the $v_i$, then each of the $P_j$ meets all the 
$T_l$. Thus, for each $P_j$ there is a $0\le l\le 2$ such that $P_j$ contains a 
subpath on which all $v_i$ are contained in $T_l$, put the preceding $v_i$ and 
the successive $v_i$ on $P$ together meet both the other trees.

By the pigeonhole principle we find that for at least $k$ of the $P_j$ the 
chosen $T_l$ is the same. But now we obtain a $K_{2,k}$-minor by taking the 
other two $T_l$s as the vertices of high degree, and use the paths $P_j$ found 
above as the vertices of degree 2 in our $K_{2,k}$-minor.
\end{proof}

\bibliography{agile}

\end{document}